\documentclass[10pt]{amsart}
\usepackage[latin1]{inputenc}
\usepackage{amsmath}
\usepackage{amsfonts}
\usepackage{amssymb}
\usepackage{amsthm}
\usepackage{graphicx,epsfig}
\usepackage{amscd}
\paperheight=29.7cm
\paperwidth=21cm

\DeclareMathOperator{\isomzero}{Isom_0}
\DeclareMathOperator{\isom}{Isom}

\newcommand{\M}{\mathbb{M}}
\newcommand{\R}{\mathbb{R}}
\newcommand{\h}{\mathbb{H}}
\newcommand{\Z}{\mathbb{Z}}

\newcommand{\C}{\mathbb{C}}
\newcommand{\s}{\mathbb{S}}

\newcommand{\PP}{\mathbb{P}}

\newcommand{\rmT}{\mathrm{T}}
\newcommand{\rmS}{\mathrm{S}}
\newcommand{\rmd}{\mathrm{d}}

\newcommand{\rmR}{\mathrm{R}}

\newcommand{\rmL}{\mathrm{L}}

\newcommand{\cS}{{\mathcal S}}

\newcommand{\cC}{{\mathcal C}}

\newcommand{\cX}{{\mathcal X}}

\let\8=\infty

\numberwithin{equation}{section}

\begin{document}

\newtheorem{thm}{Theorem}[section]
\newtheorem*{thmintro}{Theorem}
\newtheorem{cor}[thm]{Corollary}
\newtheorem{prop}[thm]{Proposition}
\newtheorem{app}[thm]{Application}
\newtheorem{lemma}[thm]{Lemma}
\newtheorem{notation}[thm]{Notations}
\newtheorem{hypothesis}[thm]{Hypothesis}

\newtheorem{defin}[thm]{Definition}
\newenvironment{defn}{\begin{defin} \rm}{\end{defin}}
\newtheorem{remk}[thm]{Remark}
\newenvironment{rem}{\begin{remk} \rm}{\end{remk}}
\newtheorem{exa}[thm]{Example}
\newenvironment{ex}{\begin{exa} \rm}{\end{exa}}
\newtheorem{cla}[thm]{Claim}
\newenvironment{claim}{\begin{cla} \rm}{\end{cla}}

\title{Minimal isometric immersions into $\s^2\times\R$ and $\h^2\times\R$}

\author{Beno\^\i t Daniel}
\address{Universit\'e de Lorraine\\
Institut \'Elie Cartan de Lorraine\\
UMR 7502\\
CNRS\\
B.P. 70239\\
F-54506 Vand\oe{}uvre-l\`es-Nancy cedex\\
FRANCE}
\email{benoit.daniel@univ-lorraine.fr}

\thanks{The author was partially supported by the ANR-11-IS01-0002 grant.}

\date{}

\subjclass[2010]{Primary: 53C42. Secondary: 53A10, 53C24}

\keywords{Isometric immersion, minimal surface, homogeneous Riemannian manifold, associate family, rigidity}

\begin{abstract}
For a given simply connected Riemannian surface $\Sigma$, we relate the problem of finding minimal isometric immersions of $\Sigma$ into $\s^2\times\R$ or $\h^2\times\R$ to a system of two partial differential equations on $\Sigma$. We prove that a constant intrinsic curvature minimal surface in $\s^2\times\R$ or $\h^2\times\R$ is either totally geodesic or part of an associate surface of a certain limit of catenoids in $\h^2\times\R$. We also prove that if a non constant curvature Riemannian surface admits a continuous one-parameter family of minimal isometric immersions into $\s^2\times\R$ or $\h^2\times\R$, then all these immersions are associate.
\end{abstract}

\maketitle

\section{Introduction} \label{sec:intro}

It is a classical result that any simply connected minimal surface in Euclidean space $\R^3$ admits a one-parameter family of minimal isometric deformations, called the \emph{associate family}. Conversely, two minimal isometric immersions of the same Riemannian surface into $\R^3$ are associate. These are easy consequences of the Gauss and Codazzi equations in $\R^3$. More generally, analogous results hold for constant mean curvature (CMC) surfaces in $3$-dimensional space forms.

The aim of this paper is to investigate extensions of these results and  related questions for minimal surfaces in the product manifolds $\s^2\times\R$ and  $\h^2\times\R$, where $\s^2$ is the $2$-sphere of curvature $1$ and $\h^2$ is the hyperbolic plane of curvature $-1$.


The systematic study of minimal surfaces in $\s^2\times\R$ and  $\h^2\times\R$ was initiated by H. Rosenberg and W. Meeks \cite{rosenillinois,mrcmh} and has been very active since then. The existence of an associate family for simply connected minimal surfaces in $\s^2\times\R$ and  $\h^2\times\R$ was proved independently by the author \cite{danieltams} and by L. Hauswirth, R. Sa Earp and E. Toubiana \cite{hset}. 

On the other hand, there exist examples of isometric minimal surfaces in $\h^2\times\R$ that are not associate. For instance, a certain limit of rotational catenoids has curvature $-1$ (see Example \ref{ex:catenoid} for details); we will call it the \emph{parabolic generalized catenoid}. This surface is consequently isometric to a horizontal hyperbolic plane $\h^2\times\{a\}$, but these two surfaces are not associate (actually this example provides a two-parameter family of non associate minimal isometric immersions). The parabolic generalized catenoid and the helicoid of Example 18 in \cite{hset} are associate. 
Also, R. Sa Earp \cite{saearp} gave examples of pairs of non associate isometric minimal surfaces in $\h^2\times\R$ that are invariant by hyperbolic screw-motions (see Example \ref{ex:saearp} for details).   

These examples show that the classical result in $\R^3$ cannot be extended to  $\h^2\times\R$. Hence it is a natural problem to investigate classifications of non associate isometric minimal surfaces (for instance this is proposed in \cite{gmmcag}, p. 932). The main result of this paper is a rigidity result: we prove that if a minimal surface in $\s^2\times\R$ or  $\h^2\times\R$ admits a one-parameter family of minimal isometric deformations, then this family is the associate family, unless the surface is a horizontal hyperbolic plane or an associate surface of a parabolic generalized catenoid in  $\h^2\times\R$ (Corollary \ref{family} and Remark \ref{rem:family}).

In this paper we consider a real constant $c\neq0$ and we let $\M^2(c)$ denote the simply connected Riemannian surface of constant curvature $c$. In particular $\s^2=\M^2(1)$ and $\h^2=\M^2(-1)$. By scaling, it is not restrictive to assume that $c\in\{1,-1\}$; however we will generally not do this normalization, except when dealing with previously known examples of minimal surfaces that were computed with this normalization.

We will use the compatibility equations for surfaces in $\M^2(c)\times\R$ obtained in \cite{danieltams}. The \emph{angle function}, i.e., the inner product of the unit normal with the unit upward pointing vertical field, plays an important role in these equations.
The strategy is to study the angle function $\nu$ of a minimal isometric immersion and to reduce the compatibility equations to a system of two partial differential equations satisfied by $\nu$. This system is similar to the one satisfied by the K\"ahler angle of the surfaces in K\"ahler-Einstein $4$-manifolds studied in \cite{km,tutams,tu2}, but in these papers the surfaces that are considered have constant intrinsic curvature, which implies the constancy of the solution or the reduction to a system of ordinary differential equations. In our case we also treat non constant curvature surfaces, for which further equations will be needed. The main results will follow from the study of this system.

The paper is organized as follows. In Section \ref{sec:angle}, we recall some results about isometric immersions into $\M^2(c)\times\R$ and we establish the system satisfied by the angle function (Theorem \ref{thm:system}). In Section \ref{sec:resolution} we derive some new equations from this system; these are compatibility equations. In particular, when the curvature is not constant, we obtain a pointwise polynomial order $0$ equation (Proposition \ref{propP}).

Section \ref{sec:constantK} is devoted to the study of constant intrinsic curvature minimal surfaces. We classify them (even locally) in Theorem \ref{thm:cc}. We think this theorem is particularily interesting in $\h^2\times\R$, since we have a classification of constant curvature minimal $2$-dimensional submanifolds of a $3$-dimensional manifold where non totally geodesic examples appear. 

Finally, in Section \ref{sec:number} we prove that, up to congruences, there cannot exist more than $6$ minimal isometric immersions of a non constant curvature surface such that no two of them are associate (Theorem \ref{thm:number}). In particular, if a non constant curvature surface admits a continuous one-parameter family of minimal isometric immersions, then all these immersions are associate (Corollary \ref{family}).


\section{The angle function} \label{sec:angle}

\subsection{The compatibility equations and the associate family}

We first fix some notation and recall some definitions and results. The projection $\M^2(c)\times\R\to\R$ is called the \emph{height function}. We let $\xi$ denote the upward pointing unit vector field that is tangent to the factor $\R$, that is, $\xi$ is the gradient of the height function.

We let $\isom(\M^2(c)\times\R)$ denote the isometry group of $\M^2(c)\times\R$. It has $4$ connected components. The connected component of the identity consists of isometries that preserve the orientations of both $\M^2(c)$ and $\R$; we will denote it by $\isomzero(\M^2(c)\times\R)$.
We say that two immersions $f:\Sigma\to\M^2(c)\times\R$ and $g:\Sigma\to\M^2(c)\times\R$ are \emph{congruent} if there exists $\Phi\in\isom(\M^2(c)\times\R)$ such that $g=\Phi\circ f$.

We will make use of the following theorem for local isometric immersions into $\M^2(c)\times\R$.

\begin{thm}[\cite{danieltams}] \label{thm:tams}
Let $(\Sigma,\rmd s^2)$ be an oriented simply connected Riemannian surface. Let $K$ be the curvature of $\rmd s^2$. Let $S:\rmT\Sigma\to\rmT\Sigma$ be a field of symmetric operators, $T\in\cX(\Sigma)$ and $\nu:\Sigma\to[-1,1]$ be a smooth function. Then there exists and isometric immersion $f:\Sigma\to\M^2(c)\times\R$ such that the shape operator with respect to the normal $N$ associated to $f$ is
$$\rmd f\circ\rmS\circ\rmd f^{-1}$$ and such that
$$\xi=\rmd f(T)+\nu N$$ if and only if the $4$-tuple $(\rmd s^2,S,T,\nu)$ satisfies the following equations on $\Sigma$:
\begin{equation} \label{eq:gauss}
K=\det S+ c\nu^2,
 \tag{C1}
\end{equation}
\begin{equation} \label{eq:codazzi}
\nabla_X SY-\nabla_Y SX-S[X,Y]=
 c\nu(\langle Y,T\rangle X-\langle X,T\rangle Y),
\tag{C2}
\end{equation}
\begin{equation} \label{eq:T1}
\nabla_XT=\nu SX,
\tag{C3}
\end{equation}
\begin{equation} \label{eq:T2}
\rmd\nu(X)+\langle SX,T\rangle=0.
\tag{C4}
\end{equation}
\begin{equation} \label{eq:normT}
||T||^2+\nu^2=1.
\tag{C5}
\end{equation}

If this is the case, then the immersion is moreover unique up to an isometry in $\isomzero(\M^2(c)\times\R)$.
\end{thm}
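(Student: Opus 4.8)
The plan is to treat this as a Bonnet-type fundamental theorem of submanifold theory for the homogeneous target $\M^2(c)\times\R$, and to prove the two implications separately, then the uniqueness. For the necessity (``only if'') direction I would start from an isometric immersion $f$ whose shape operator is $\rmd f\circ\rmS\circ\rmd f^{-1}$ and which satisfies $\xi=\rmd f(T)+\nu N$, and read the five equations off the standard structure equations of a hypersurface. The Gauss equation expresses $K$ as $\det S$ plus the ambient sectional curvature of the tangent plane; computing the latter in the product $\M^2(c)\times\R$ produces exactly the term $c\nu^2$, giving \eqref{eq:gauss}. The Codazzi equation reads $(\nabla_X S)Y-(\nabla_Y S)X=(\bar R(X,Y)N)^{\top}$, where $\bar R$ is the ambient curvature; evaluating the tangential part of $\bar R(X,Y)N$ for $\M^2(c)\times\R$ yields the right-hand side of \eqref{eq:codazzi}. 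The remaining three equations come from the fact that $\xi$ is parallel in $\M^2(c)\times\R$, i.e.\ $\bar\nabla\xi=0$: differentiating $\xi=\rmd f(T)+\nu N$ along a tangent vector $X$ and splitting into tangential and normal parts gives \eqref{eq:T1} and \eqref{eq:T2}, while $|\xi|^2=1$ gives \eqref{eq:normT}.

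The sufficiency (``if'') direction is the substantial part. I would realize $\M^2(c)\times\R$ explicitly as a quadric-times-line inside a flat $4$-dimensional space, namely Euclidean $\E^4$ when $c>0$ (with $\M^2(c)$ a round sphere) and the Lorentzian space $\LL^3\times\R$ when $c<0$ (with $\M^2(c)$ a component of a hyperboloid), so that the Levi-Civita connection and curvature of the target are obtained from the flat ambient connection. I would then set up the overdetermined first-order system for the unknown adapted frame $(f,e_1,e_2,N)$ along $\Sigma$: the derivatives of $f$ are prescribed to be the $e_i$, and the derivatives of $e_1,e_2,N$ are prescribed in terms of the Levi-Civita connection of $\rmd s^2$, the operator $S$, and the reconstructed vertical field $\rmd f(T)+\nu N$. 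The key step is to check that the integrability (Frobenius / mixed-partials) conditions of this system are precisely the compatibility equations \eqref{eq:gauss}--\eqref{eq:normT}: equations \eqref{eq:gauss} and \eqref{eq:codazzi} encode the flatness in the surface directions, while \eqref{eq:T1}, \eqref{eq:T2} and \eqref{eq:normT} guarantee consistency of the vertical field with $S$ and with the prescribed normal. Since $\Sigma$ is simply connected, the integrable system admits a global solution once an adapted initial frame is fixed at a base point, and this solution is the desired immersion $f$.

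For uniqueness up to $\isomzero(\M^2(c)\times\R)$, I would observe that two solutions of the integrable system differ only by their initial frame at the base point. Any two orientation-compatible adapted initial frames are related by a unique element of $\isomzero(\M^2(c)\times\R)$, and by uniqueness of the solution of the integrable system with prescribed initial data, the two immersions then differ by that single ambient isometry.

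I expect the main obstacle to lie in the bookkeeping of the sufficiency direction: because $\M^2(c)\times\R$ is not isotropic, the vertical field $\xi$ must be carried along as additional data beyond the frame, and one must verify that \eqref{eq:T1}--\eqref{eq:normT} are exactly what forces the reconstructed $\rmd f(T)+\nu N$ to be a parallel unit field, so that the integrated frame genuinely lands in the product geometry and not merely in a flat $4$-space. Carrying the two signature cases $c>0$ and $c<0$ uniformly, or separating them cleanly, is the principal source of technical care.
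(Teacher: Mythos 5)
Note first that the paper itself contains no proof of this statement: it is recalled from \cite{danieltams}, and the proof given there is essentially the one you outline --- necessity by reading the Gauss and Codazzi equations off the hypersurface structure equations and deriving \eqref{eq:T1}--\eqref{eq:normT} from the parallelism of $\xi$, sufficiency by realizing $\M^2(c)\times\R$ as a quadric cylinder inside the flat space $\E^4$ (for $c>0$) or $\LL^3\times\R$ (for $c<0$) and integrating the resulting overdetermined frame system on the simply connected surface, and uniqueness from the simply transitive action of $\isomzero(\M^2(c)\times\R)$ on adapted frames. So your proposal is correct in outline and follows the same route as the cited source; the only point it compresses is the codimension-two bookkeeping in the flat ambient space (the second normal direction is essentially the position vector of the $\M^2(c)$ factor, so the Ricci/normal-connection equation must be verified along with Gauss and Codazzi before invoking the fundamental theorem for submanifolds of flat pseudo-Euclidean spaces), which is precisely the verification carried out in \cite{danieltams}.
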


We will refer to equations \eqref{eq:gauss}, \eqref{eq:codazzi}, \eqref{eq:T1}, \eqref{eq:T2}, \eqref{eq:normT} as the \emph{compatibility equations} for surfaces in $\M^2(c)\times\R$ and to the $4$-tuple $(\rmd s^2,S,T,\nu)$ as the \emph{Gauss-Codazzi data} of the immersion. The function $\nu$ is called the \emph{angle function} of the immersion. Equations \eqref{eq:gauss} and \eqref{eq:codazzi} are the Gauss and Codazzi equation.

Note that \eqref{eq:gauss}, \eqref{eq:codazzi}, \eqref{eq:T1}, \eqref{eq:T2}, \eqref{eq:normT} are necessary conditions even if $\Sigma$ is not simply connected. Also, if $f:\Sigma\to\M^2(c)\times\R$ is an immersion and $\Phi\in\isomzero(\M^2(c)\times\R)$, then $f$ and  $\Phi\circ f$ have the same Gauss-Codazzi data.

It follows from this theorem (see \cite{danieltams}) that if $\Sigma$ is simply connected and oriented and  $f:\Sigma\to\M^2(c)\times\R$ is a minimal isometric immersion with Gauss-Codazzi data $(\rmd s^2,S,T,\nu)$, then for every $\theta\in\R/(2\pi\Z)$ there exists minimal isometric immersion $f^\theta:\Sigma\to\M^2(c)\times\R$ with Gauss-Codazzi data $(\rmd s^2,e^{\theta J}S,e^{\theta J}T,\nu)$, where $J$ is the rotation of angle $\pi/2$ on $\rmT\Sigma$. Moreover, for each $\theta\in\R/(2\pi\Z)$, $f^\theta$ is unique up to an isometry in $\isomzero(\M^2(c)\times\R)$, and is generically not congruent to $f$ when $\theta\neq0$. The family $(f^\theta)_{\theta\in\R/(2\pi\Z)}$ is called the \emph{associate family} of the immersion $f$. Note that $f^\pi=\sigma\circ f$ where $\sigma\in\isom(\M^2(c)\times\R)$ is the reflection with respect to a horizontal totally geodesic surface $\M^2(c)\times\{a\}$ for some $a\in\R$ (see Proposition 3.8 in \cite{danieltams}). 

Similarly, two minimal surfaces in $\M^2(c)\times\R$ are said to be associate if they are images of two associate minimal isometric immersions of the same surface. There is a slight difference between these two notions; for instance, the parabolic generalized catenoid is the image of non associate minimal isometric immersion of the hyperbolic plane (see Example \ref{ex:catenoid} for details).

\begin{rem}
The existence of the associate family was also proved by L. Haus-wirth, R. Sa Earp and E. Toubiana \cite{hset} using the harmonicity of the horizontal and vertical projections of conformal minimal immersions. We also mention that 
L. Hauswirth and H. Rosenberg \cite{hrmatcontemp} developped the theory of complete finite total curvature minimal surfaces in $\h^2\times\R$ using the relation between the angle function and solutions to the elliptic $\sinh$-Gordon equation. 
\end{rem}

\begin{rem}
The associate family also exists for instance for minimal surfaces in $\C\PP^2$ \cite{egt}. A general discussion about the existence of an associate family can be found in \cite{lodovicipiccione}. 
\end{rem}

\subsection{A system of two partial differential equations}

We first show that, in the case of minimal isometric immersions, the compatibility equations reduce, away from points where $\nu^2=1$, to a system of two partial differential equations involving only the metric $\rmd s^2$ and the angle function $\nu$. Since minimal surfaces in $\M^2(c)\times\R$ are locally graphs of functions satisfying an elliptic partial differential equation with real analytic coefficients, all smooth surfaces, metrics and functions that we consider in this paper will be real analytic.

\begin{thm} \label{thm:system}
Let $\Sigma$ be a minimal surface in $\M^2( c)\times\R$. Then its angle function $\nu:\Sigma\to[-1,1]$ satisfies
\begin{equation} \label{eq:gaussnu}
||\nabla\nu||^2=-(1-\nu^2)(K- c\nu^2), \tag{M1}
\end{equation}
\begin{equation} \label{eq:jacobinu}
\Delta\nu-2K\nu+ c(1+\nu^2)\nu=0, \tag{M2}
\end{equation}
where $K$ denotes the intrinsic curvature of $\Sigma$.

Conversely, let $\Sigma$ be a real analytic simply connected Riemannian surface and $\nu:\Sigma\to(-1,1)$ a smooth function satisfying \eqref{eq:gaussnu} and \eqref{eq:jacobinu} where $K$ is the curvature of $\Sigma$. Then there exists an isometric minimal immersion $f:\Sigma\to\M^2( c)\times\R$ whose angle function is $\nu$. Moreover, if $g:\Sigma\to\M^2( c)\times\R$ is another isometric minimal immersion whose angle function is $\nu$, then $f$ and $g$ are associate.
\end{thm}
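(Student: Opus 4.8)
For the direct implication I would work entirely with the compatibility equations \eqref{eq:gauss}--\eqref{eq:normT} of Theorem \ref{thm:tams}, using that minimality means $\tr S=0$. Equation \eqref{eq:T2} says precisely that $\nabla\nu=-ST$ (by symmetry of $S$). Since $S$ is trace-free symmetric, the Cayley--Hamilton identity gives $S^2=-(\det S)\,\rmI$, so $||\nabla\nu||^2=\langle S^2T,T\rangle=-(\det S)\,||T||^2$; substituting $\det S=K-c\nu^2$ from \eqref{eq:gauss} and $||T||^2=1-\nu^2$ from \eqref{eq:normT} yields \eqref{eq:gaussnu}. For \eqref{eq:jacobinu} I would compute $\Delta\nu=\dv(\nabla\nu)=-\dv(ST)$. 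Expanding $\dv(ST)=\langle\dv S,T\rangle+\nu\,\tr(S^2)$ via \eqref{eq:T1}, I first note that contracting the Codazzi equation \eqref{eq:codazzi} together with $\tr S=0$ gives $\dv S=c\nu T$; together with $\tr(S^2)=-2\det S$ this produces $\Delta\nu=-c\nu(1-\nu^2)+2\nu\det S$, and \eqref{eq:gauss} turns this into \eqref{eq:jacobinu}.

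For the converse the plan is to reconstruct Gauss--Codazzi data $(\rmd s^2,S,T,\nu)$ satisfying \eqref{eq:gauss}--\eqref{eq:normT} and then invoke Theorem \ref{thm:tams}. The metric and $\nu$ are given, and I want $S$ trace-free symmetric so that the immersion is minimal. On the open set where $\nabla\nu\neq0$ I set $d:=K-c\nu^2$, which is negative there by \eqref{eq:gaussnu}; I prescribe $\det S=d$, which leaves a one-parameter circle of trace-free symmetric operators, parametrized as $S=\sqrt{-d}\,R_\phi$, where $R_\phi$ is the reflection making angle $\phi$ with the adapted frame $e_1=\nabla\nu/||\nabla\nu||$, $e_2=Je_1$. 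Defining $T:=\tfrac1d S\nabla\nu$ then forces \eqref{eq:T2} and, using \eqref{eq:gaussnu} once more, also \eqref{eq:normT}, while \eqref{eq:gauss} holds by construction. Thus \eqref{eq:gauss}, \eqref{eq:T2} and \eqref{eq:normT} hold for every value of the free angle $\phi$, and \eqref{eq:gaussnu} is exactly the algebraic compatibility that makes this reconstruction consistent.

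It remains to choose $\phi$ so that the two differential equations \eqref{eq:codazzi} and \eqref{eq:T1} hold. A direct computation in the adapted frame, in which the Levi--Civita connection is encoded by the rotation speed of $(e_1,e_2)$ and hence by $K$, should show that \eqref{eq:codazzi} and \eqref{eq:T1} are equivalent, modulo the already-established relation \eqref{eq:gaussnu}, to a single total-differential equation $\rmd\phi=\omega$, where $\omega$ is an explicit $1$-form built from the connection form of the frame and from functions of $\nu$ and its first derivatives. The integrability condition $\rmd\omega=0$ then reduces precisely to the second equation \eqref{eq:jacobinu}; since $\Sigma$ is simply connected, $\phi$ exists and is unique up to an additive constant. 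Feeding $(\rmd s^2,S,T,\nu)$ into Theorem \ref{thm:tams} produces the desired minimal immersion $f$. Finally, shifting the additive constant by $\theta$ replaces $(S,T)$ by $(e^{\theta J}S,e^{\theta J}T)$, i.e.\ moves $f$ within its associate family; since any competing immersion $g$ with the same angle function has trace-free shape operator and must, by the same reconstruction, correspond to another solution of $\rmd\phi=\omega$, it differs from $f$ only by such a shift and is therefore associate to $f$.

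The main obstacle I anticipate is the reduction of the overdetermined pair \eqref{eq:codazzi}--\eqref{eq:T1} to a single closed $1$-form equation and the verification that its integrability condition is exactly \eqref{eq:jacobinu}; a secondary technical point is that the construction above is a priori valid only off the analytic set $\{\nabla\nu=0\}$, so one must invoke real analyticity to extend the data $(S,T)$ across this set (and treat the degenerate case $\nabla\nu\equiv0$ separately) before applying Theorem \ref{thm:tams} on all of $\Sigma$.
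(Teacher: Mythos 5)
Your proposal is correct, and on the converse half it is essentially the paper's own strategy (reconstruct the Gauss--Codazzi data with a circle's worth of gauge freedom, reduce the differential constraints to an exact $1$-form equation, get integrability from \eqref{eq:gaussnu} and \eqref{eq:jacobinu}, use simple connectedness, and read the associate-family uniqueness off the additive constant), but two points of genuine divergence deserve comment. In the direct half your derivation of \eqref{eq:jacobinu} is different from the paper's and completely self-contained: you contract the Codazzi equation \eqref{eq:codazzi} (using $\tr S=0$) to get $\dv S=c\nu T$, and combine this with \eqref{eq:T1} and Cayley--Hamilton ($S^2=-(\det S)\rmI$, $\tr(S^2)=-2\det S$) to compute $\Delta\nu=-\dv(ST)$; the paper instead notes that $\nu$ is a Jacobi function because $\xi$ is a Killing field and quotes the formula $\rmL=\Delta-2K+c(1+\nu^2)$ for the stability operator from \cite{danielcmh}. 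Your route buys independence from that citation (and your Cayley--Hamilton proof of \eqref{eq:gaussnu} also avoids the paper's case split at points where $T=0$), while the paper's buys brevity and the conceptual explanation that \eqref{eq:jacobinu} is exactly the Jacobi equation. In the converse, however, your gauge choice is less convenient than the paper's: you rotate $S$ against the adapted frame $e_1=\nabla\nu/||\nabla\nu||$, so the whole construction (including $T=\tfrac1d S\nabla\nu$ with $d=K-c\nu^2$) degenerates on the set $\{\nabla\nu=0\}$, which forces the analytic-continuation repair you flag at the end. The paper rotates $T$ against an \emph{arbitrary} local orthonormal frame, setting $T=\sqrt{1-\nu^2}\,e^{\theta J}e_1$ and solving $\rmd\theta=-\frac\nu{1-\nu^2}(\rmd\nu\circ J)-\alpha$, then defining $S$ as the symmetric traceless operator with $ST=-\nabla\nu$; since only the hypothesis $\nu^2<1$ is needed, no singular set ever appears. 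Finally, a precision worth knowing before you carry out your main computation: in the paper the exact $1$-form equation encodes \eqref{eq:T1} alone, and the Codazzi equation \eqref{eq:codazzi} is not folded into it but verified afterwards on the frame $(T,JT)$ using both \eqref{eq:gaussnu} and \eqref{eq:jacobinu}; likewise the closedness of the $1$-form requires both equations (given \eqref{eq:gaussnu} it reduces to $\nu$ times the left-hand side of \eqref{eq:jacobinu}), not \eqref{eq:jacobinu} alone as you predict --- harmless for your plan, but it slightly changes the bookkeeping.
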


\begin{proof}
Let $\Sigma$ be a minimal surface in $\M^2( c)\times\R$ and $(\rmd s^2,S,T,\nu)$ its Gauss-Codazzi data. Considering the orientable double cover if necessary, we may assume that $\Sigma$ is oriented. As we already mentioned, $(\Sigma,\rmd s^2)$ is real analytic. Equation \eqref{eq:T2} and the symmetry of $S$ imply that $ST=-\nabla\nu$; moreover, since $\Sigma$ is minimal, one has $SJ=-JS$ where $J$ denotes the rotation of angle $\pi/2$ in $\rmT\Sigma$. Hence, considering the orthonormal frame $(T/||T||,JT/||T||)$ at a point where $T\neq0$, we obtain that $\det S=-||\nabla\nu||^2/||T||^2$. Then the Gauss equation \eqref{eq:gauss} and equation \eqref{eq:normT} give \eqref{eq:gaussnu} when $T\neq0$, i.e., $\nu^2\neq1$. At points where $\nu^2=1$, one has $\nabla\nu=0$, so \eqref{eq:gaussnu} also holds. 

Since $\xi$ is a Killing field, $\nu$ satisfies $\rmL\nu=0$ where $\rmL$ is the Jacobi operator of $\Sigma$. Since $\rmL=\Delta-2K+ c(1+\nu^2)$ 
(see \cite{danielcmh}, Section 5.2), this gives \eqref{eq:jacobinu}, which concludes the proof of the first assertion. It is also useful to notice that, by \eqref{eq:T1}, $T$ satisfies
\begin{equation} \label{eq:Tgradnu}
\nabla_XT=\frac\nu{1-\nu^2}(-\langle\nabla\nu,X\rangle T+\langle J\nabla\nu,X\rangle JT)
\end{equation}
for every vector field $X$ at points where $\nu^2\neq1$.

Let now  $\Sigma$ be a real analytic simply connected Riemannian surface and $\nu:\Sigma\to(-1,1)$ a smooth function satisfying \eqref{eq:gaussnu} and \eqref{eq:jacobinu}. The fact that $\nu$  satisfies \eqref{eq:jacobinu} implies that it is real analytic. The first step is to find a vector field $T$ satisfying \eqref{eq:normT} and \eqref{eq:Tgradnu}. Let $(e_1,e_2)$ be an orthonormal frame defined in an open set $U\subset\Sigma$, and let $(\omega_1,\omega_2)$ be its dual coframe. We assume that $U$ is simply connected. Let $\alpha$ be the $1$-form on $U$ such that $\nabla_Xe_1=\alpha(X)e_2$ for every $X\in\cX(U)$. Let $\theta:U\to\R$ be a smooth function and $T=\sqrt{1-\nu^2}e^{\theta J}e_1$. Then, as $\nu^2\neq1$,
$$\nabla_XT=-\frac{\nu}{1-\nu^2}\rmd\nu(X)T+\rmd\theta(X)JT+\alpha(X)JT,$$ so $T$ satisfies \eqref{eq:Tgradnu} if and only if
$$\frac\nu{1-\nu^2}\langle J\nabla\nu,X\rangle=\rmd\theta(X)+\alpha(X)$$ for every $X\in\cX(U)$, i.e., if and only if
\begin{equation} \label{eq:dtheta}
\rmd\theta=-\frac\nu{1-\nu^2}(\rmd\nu\circ J)-\alpha.
\end{equation}
We have
\begin{eqnarray*}
\rmd\left(\frac\nu{1-\nu^2}(\rmd\nu\circ J)+\alpha\right) & = & 
\frac{1+\nu^2}{(1-\nu^2)^2}\rmd\nu\wedge(\rmd\nu\circ J)+\frac\nu{1-\nu^2}\rmd(\rmd\nu\circ J)+\rmd\alpha \\
& = & \left(-\frac{1+\nu^2}{(1-\nu^2)^2}||\nabla\nu||^2-\frac\nu{1-\nu^2}\Delta\nu-K\right)\omega_1\wedge\omega_2 \\
& = & 0
\end{eqnarray*}
by \eqref{eq:gaussnu} and \eqref{eq:jacobinu}. So \eqref{eq:dtheta} has a solution $\theta:U\to\R$, which is unique up to an additive constant $\theta_0$. Hence there exists $T\in\cX(U)$ satisfying \eqref{eq:normT} and \eqref{eq:Tgradnu}, and $T$ is unique up to the rotation by a fixed angle $\theta_0$. We now fix such a vector field $T$. Since $T$ does not vanish on $U$, there exists a unique symmetric traceless operator $S:\rmT U\to\rmT U$ such that $ST=-\nabla\nu$.

We now prove that $(\rmd s^2,S,T,\nu)$ satisfies \eqref{eq:gauss}, \eqref{eq:codazzi}, \eqref{eq:T1}, \eqref{eq:T2} and \eqref{eq:normT} on $U$. Equations \eqref{eq:normT} and \eqref{eq:T2} follow immediately from the definitions of $T$ and $S$ and from the symmetry of $S$, and then \eqref{eq:T1} follows from \eqref{eq:Tgradnu}. Following a previous computation we also get $\det S=-||\nabla\nu||^2/||T||^2$, so \eqref{eq:gaussnu} gives \eqref{eq:gauss}. 

It now suffices to check that \eqref{eq:codazzi} is satisfied for $X=T$ and $Y=JT$. First, using \eqref{eq:T1} and the fact that $SJ=-JS$ and that $\nabla_Z$ commutes with $J$ for every $Z$, we get $$[T,JT]=2\nu JST,$$
so, since $S$ is symmetric, 
$$\langle S[T,JT],T\rangle=0,\quad
\langle S[T,JT],JT\rangle=-2\nu||ST||^2=-2\nu||\nabla\nu||^2.$$
Also, $$\nabla_TSJT-\nabla_{JT}ST=J\nabla_T\nabla\nu+\nabla_{JT}\nabla\nu,$$ which yields (using the symmetry of the Hessian $(X,Y)\mapsto\langle X,\nabla_Y\nabla\nu\rangle$)
$$\langle\nabla_TSJT-\nabla_{JT}ST,T\rangle=0, \quad
\langle\nabla_TSJT-\nabla_{JT}ST,JT\rangle=||T||^2\Delta\nu.$$
Consequently we have
$$\langle\nabla_TSJT-\nabla_{JT}ST-S[T,JT],T\rangle=0,$$
and, by \eqref{eq:gaussnu}, \eqref{eq:jacobinu} and \eqref{eq:normT},
$$\langle\nabla_TSJT-\nabla_{JT}ST-S[T,JT],JT\rangle=-c\nu(1-\nu^2)^2.$$
These two equations are \eqref{eq:codazzi} for $X=T$ and $Y=JT$. This concludes the proof of the fact that $(\rmd s^2,S,T,\nu)$ satisfies \eqref{eq:gauss}, \eqref{eq:codazzi}, \eqref{eq:T1}, \eqref{eq:T2} and \eqref{eq:normT} on $U$.

Since $\Sigma$ is simply connected, classical arguments prove that we can extend $S$ and $T$, defined on $U$, to the whole $\Sigma$ in a unique way. Since $T$ is unique on $U$ up to a rotation  by a fixed angle $\theta_0$ and $S$ is defined uniquely in terms of $T$ and $\nu$, this concludes the proof.
\end{proof}

In the second part of this theorem, we do not treat the case of functions $\nu$ that take values $1$ and $-1$, but we will not need this in the sequel.

\begin{rem} \label{minusnu} 
 It is clear that $\nu$ satisfies \eqref{eq:gaussnu} and \eqref{eq:jacobinu} if and only if $-\nu$ does. The associate families of immersions having $\nu$ and $-\nu$ as angle functions differ by a rotation of angle $\pi$ around a horizontal geodesic in $\M^2(c)\times\R$ (see Proposition 3.8 in \cite{danieltams}). 
\end{rem}

\begin{rem}
 L. Hauswirth, R. Sa Earp and E. Toubiana \cite{hset} studied minimal immersions using the fact that the projections into $\h^2$ and into $\R$ are harmonic. In particular they proved that two minimal isometric immersions whose height functions have the same Hopf differential are congruent.
\end{rem}

From Theorem \ref{thm:system} follows the next result, which was already noted in \cite{dillen1,dillen2} (where, more generally, all surfaces in $\M^2(c)\times\R$ with constant angle function were classified).

\begin{lemma} \label{constantangle}
 Let $\Sigma$ be a minimal surface in $\M^2( c)\times\R$ with constant angle function $\nu$. Then
\begin{itemize}
 \item either $\nu^2=1$, $K= c$ and $\Sigma$ is part of a horizontal surface $\M^2(c)\times\{a\}$ for some $a\in\R$,
\item either $\nu=0$, $K=0$ and $\Sigma$ is part of a vertical surface $\gamma\times\R$ where $\gamma$ is a geodesic of $\M^2(c)$.
\end{itemize}
In particular, $\Sigma$ is totally geodesic.
\end{lemma}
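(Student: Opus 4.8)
The plan is to substitute the hypothesis that $\nu$ is constant directly into the two equations \eqref{eq:gaussnu} and \eqref{eq:jacobinu} provided by the first part of Theorem~\ref{thm:system}. Since $\nu$ is constant we have $\nabla\nu=0$ and $\Delta\nu=0$, so \eqref{eq:gaussnu} reduces to $(1-\nu^2)(K-c\nu^2)=0$ and \eqref{eq:jacobinu} reduces to $\nu\bigl(c(1+\nu^2)-2K\bigr)=0$. I would then carry out a short case analysis. If $\nu=0$, the first relation gives $K=0$. If $\nu\neq0$, the second relation gives $2K=c(1+\nu^2)$; combined with the first it forces either $\nu^2=1$ (whence $K=c$) or $K=c\nu^2$, and in the latter subcase $K=c\nu^2$ together with $2K=c(1+\nu^2)$ and $c\neq0$ yields $\nu^2=1$, contradicting $\nu^2\neq1$. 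Hence only the two announced pairs $(\nu,K)=(0,0)$ and $\nu^2=1$, $K=c$ survive.

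It then remains to identify the surface geometrically, for which I would return to the compatibility equations of Theorem~\ref{thm:tams} (valid as necessary conditions, so that simple connectedness is not needed) and to the splitting $\xi=\rmd f(T)+\nu N$. In the case $\nu^2=1$, equation \eqref{eq:normT} gives $T=0$; since $\rmd f(T)$ is the gradient on $\Sigma$ of the height function, the height is constant and $\Sigma$ lies in some slice $\M^2(c)\times\{a\}$, while \eqref{eq:T1} forces $\nu S=0$, hence $S=0$ and $\Sigma$ is totally geodesic. In the case $\nu=0$, equation \eqref{eq:normT} makes $T$ a unit field, so $\xi=\rmd f(T)$ is tangent to $\Sigma$ everywhere; moreover \eqref{eq:T2} gives $\langle SX,T\rangle=0$ for all $X$, so $T$ lies in the kernel of the symmetric traceless operator $S$, which therefore vanishes, and $\Sigma$ is again totally geodesic.

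The part I expect to require the most care is the passage from ``$\xi$ is tangent to $\Sigma$'' to ``$\Sigma$ is part of a vertical surface $\gamma\times\R$ with $\gamma$ a geodesic''. Since $\xi$ is tangent everywhere, the integral curves of $\xi$, that is the vertical lines, through points of $\Sigma$ remain in $\Sigma$, so $\Sigma$ is locally a union of such lines and projects onto a curve $\gamma\subset\M^2(c)$, i.e.\ $\Sigma\subset\gamma\times\R$. Minimality then pins down $\gamma$: the principal curvature of $\gamma\times\R$ in the horizontal direction equals the geodesic curvature of $\gamma$, the vertical direction contributing a straight line, so $S=0$ forces $\gamma$ to be a geodesic of $\M^2(c)$. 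The analogous statement for $\nu^2=1$ is immediate, since a slice $\M^2(c)\times\{a\}$ is totally geodesic. Everything else is a direct substitution, so I expect no genuine obstacle beyond this geometric identification.
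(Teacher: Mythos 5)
Your proof is correct and takes essentially the same route as the paper: substitute constancy of $\nu$ into \eqref{eq:gaussnu} and \eqref{eq:jacobinu} to force $(\nu,K)=(0,0)$ or $\nu^2=1$, $K=c$, then use the vector field $T$ (vanishing, respectively unit) to identify $\Sigma$ as part of a slice $\M^2(c)\times\{a\}$, respectively of a vertical cylinder $\gamma\times\R$ over a geodesic. Your additional derivation of $S=0$ from \eqref{eq:T1} and \eqref{eq:T2} just makes explicit the total geodesy that the paper leaves to the reader.
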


\begin{proof}
Assume that $\nu^2=1$. Then by \eqref{eq:jacobinu} we have $K=c$. Moreover, with the notation of Theorem \ref{thm:tams}, we have $T=0$, so the height function of the surface is constant, i.e., $\Sigma$ is part of a horizontal surface $\M^2( c)\times\{a\}$ for some $a\in\R$.

Assume that $\nu^2<1$. Then by \eqref{eq:gaussnu} we have $K=c\nu^2$. Reporting in \eqref{eq:jacobinu} yields $c\nu(1-\nu^2)=0$, so $\nu=0$ and $K=0$. The fact that $\nu=0$ means that the vertical field $\xi$ is tangent to $\Sigma$ everywhere, so there exists a curve $\gamma\subset\M^2(c)$ such that $\Sigma$ is part of $\gamma\times\R$. Since $\Sigma$ is minimal, $\gamma$ is a geodesic of $\M^2(c)$.
\end{proof}

Note that conversely it is clear from \eqref{eq:codazzi} and \eqref{eq:normT} that a totally geodesic surface has either $\nu=0$ or $T=0$, i.e., $\nu^2=1$.

\section{Study of the system} \label{sec:resolution}

In this section we derive some further equations from \eqref{eq:gaussnu} and \eqref{eq:jacobinu} that will be useful to answer some geometric questions. Here $\Sigma$ is a real analytic Riemannian surface, $\nabla$ its Riemannian connection, $\rmR$ its Riemann curvature tensor with the following sign convention:
$$\rmR(X,Y)Z=\nabla_Y\nabla_XZ-\nabla_X\nabla_YZ+\nabla_{[X,Y]}Z,$$
and $K$ its curvature.

We first settle some notation. If $f$ is a smooth function on $\Sigma$, its Hessian $\nabla^2f$ is defined by $$(\nabla^2f)(X,Y)=\langle X,\nabla_Y\nabla f\rangle.$$ This is a symmetric $2$-tensor. The Laplace-Beltrami operator $\Delta$ applied to $f$ is the trace of $\nabla^2f$. 

We now consider a local orthonormal frame $(e_1,e_2)$. Let $J$ be the linear operator such that $Je_1=e_2$ and $Je_2=-e_1$. There exists a $1$-form $\alpha$ such that $$\nabla_Xe_i=\alpha(X)Je_i$$ for all vector fields $X$. Setting $\alpha_i=\alpha(e_i)$, we have
$$\nabla_{e_1}e_1=\alpha_1e_2,\quad
\nabla_{e_2}e_1=\alpha_2e_2,\quad
\nabla_{e_1}e_2=-\alpha_1e_1,\quad
\nabla_{e_2}e_2=-\alpha_2e_1.$$
We will set 
$$f_i=\langle e_i,\nabla f\rangle=e_i\cdot f,$$
$$f_{ij}=(\nabla^2f)(e_i,e_j)=e_j\cdot f_i-(\nabla_{e_j}e_i)\cdot f.$$
In the sequel we will use the following differentation formulas:
$$\begin{array}{ll}
e_1\cdot f_1=f_{11}+\alpha_1f_2, &
e_2\cdot f_1=f_{12}+\alpha_2f_2, \\
e_1\cdot f_2=f_{12}-\alpha_1f_1, &
e_2\cdot f_2=f_{22}-\alpha_2f_1.
\end{array}$$

Let $\nu:\Sigma\to[-1,1]$ be a smooth (hence real analytic) function satisfying \eqref{eq:gaussnu} and \eqref{eq:jacobinu}. In the frame $(e_1,e_2)$, equations \eqref{eq:gaussnu} and \eqref{eq:jacobinu} read as
\begin{equation}\label{E1-1}
 \nu_1^2+\nu_2^2=-(1-\nu^2)(K- c \nu^2),\tag{E1}
\end{equation}
\begin{equation}\label{E2-1}
 \nu_{11}+\nu_{22}=\nu(2K- c(1+\nu^2)).\tag{E2-1}
\end{equation}

The first step is to obtain another order one equation.

\begin{lemma}
If $\nabla\nu$ does not vanish and $\nu$ satisfies \eqref{E1-1} and \eqref{E2-1}, then $\nu$ satisfies 
\begin{equation} \label{E2-2}
 2(K- c \nu^2)\nu_{12}=K_1\nu_2+K_2\nu_1-6 c\nu\nu_1\nu_2, \tag{E2-2}
\end{equation}
\begin{equation} \label{E2-3}
(K- c \nu^2)(\nu_{11}-\nu_{22})=-3 c\nu(\nu_1^2-\nu_2^2)+K_1\nu_1-K_2\nu_2.
 \tag{E2-3}
\end{equation}
\end{lemma}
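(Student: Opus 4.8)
The plan is to obtain both second-order equations by differentiating the first-order relation \eqref{E1-1} in the two frame directions and then feeding in the trace equation \eqref{E2-1}. First I would apply $e_1$ and $e_2$ to $\nu_1^2+\nu_2^2$. Using the differentiation formulas $e_1\cdot\nu_1=\nu_{11}+\alpha_1\nu_2$, $e_1\cdot\nu_2=\nu_{12}-\alpha_1\nu_1$ and their $e_2$-analogues, the connection terms $\alpha_i$ cancel in pairs, so that $e_i\cdot(\nu_1^2+\nu_2^2)=2(\nu_1\nu_{1i}+\nu_2\nu_{2i})$. Hence differentiating \eqref{E1-1} gives the two equations
$$\nu_1\nu_{11}+\nu_2\nu_{12}=P_1,\qquad \nu_1\nu_{12}+\nu_2\nu_{22}=P_2,$$
where $P_i=\tfrac12\,e_i\cdot\big(-(1-\nu^2)(K-c\nu^2)\big)$. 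Expanding the right-hand side I expect the explicit form
$$P_i=\tfrac12(\nu^2-1)K_i+\nu\nu_i(K+c-2c\nu^2).$$

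Next I would set $D=\nu_{11}-\nu_{22}$ and treat $(D,\nu_{12})$ as the unknowns, eliminating the trace $\nu_{11}+\nu_{22}$ by means of \eqref{E2-1}. Writing $S=\nu(2K-c(1+\nu^2))$ and substituting $\nu_{11}=\tfrac12(S+D)$, $\nu_{22}=\tfrac12(S-D)$ turns the two differentiated equations into a $2\times2$ linear system in $(D,\nu_{12})$ whose determinant equals $\tfrac12(\nu_1^2+\nu_2^2)=\tfrac12||\nabla\nu||^2$. This is exactly where the hypothesis $\nabla\nu\neq0$ is used: it guarantees the determinant is nonzero, so Cramer's rule solves the system uniquely. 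Solving for $\nu_{12}$ produces a numerator equal to $\nu_1P_2+\nu_2P_1-\nu_1\nu_2 S$; collecting the $\nu\nu_1\nu_2$ terms via the identity $2(K+c-2c\nu^2)-(2K-c-c\nu^2)=3c(1-\nu^2)$ lets this numerator factor as $(\nu^2-1)\big(\tfrac12(K_1\nu_2+K_2\nu_1)-3c\nu\nu_1\nu_2\big)$.

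The final step is to divide by $||\nabla\nu||^2$ and simplify using \eqref{E1-1} in the form $||\nabla\nu||^2=(\nu^2-1)(K-c\nu^2)$. Since $\nabla\nu\neq0$ forces $||\nabla\nu||^2>0$, hence $\nu^2<1$ and $K-c\nu^2\neq0$, the common factor $\nu^2-1$ cancels legitimately, leaving precisely $2(K-c\nu^2)\nu_{12}=K_1\nu_2+K_2\nu_1-6c\nu\nu_1\nu_2$, which is \eqref{E2-2}. The computation for $D$ is entirely parallel: its numerator is $\nu_1P_1-\nu_2P_2-\tfrac12 S(\nu_1^2-\nu_2^2)$, which by the analogous identity $(K+c-2c\nu^2)-\tfrac12(2K-c-c\nu^2)=\tfrac{3c}2(1-\nu^2)$ factors through $\nu^2-1$, and after the same cancellation yields \eqref{E2-3}. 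I do not expect any conceptual obstacle here; the only real difficulty is the bookkeeping, namely carrying $P_1,P_2,S$ correctly through the elimination and recognizing the common factor $\nu^2-1$ that \eqref{E1-1} removes, together with checking that the determinant of the reduced system is $||\nabla\nu||^2$ so that the nonvanishing hypothesis is exactly what makes the division valid.
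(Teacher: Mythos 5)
Your proof is correct and follows essentially the same route as the paper: both differentiate \eqref{E1-1} in the two frame directions and then substitute \eqref{E2-1} and \eqref{E1-1}, your Cramer's rule step being exactly the paper's linear combinations $\nu_2\cdot(\text{first})+\nu_1\cdot(\text{second})$ for \eqref{E2-2} and $\nu_1\cdot(\text{first})-\nu_2\cdot(\text{second})$ for \eqref{E2-3}. The only difference is presentational (solving a $2\times2$ system with determinant $\tfrac12\|\nabla\nu\|^2$ versus taking those combinations directly), and your bookkeeping, including the cancellation of the factor $\nu^2-1$ via \eqref{E1-1}, checks out.
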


\begin{proof}
By \eqref{E1-1}, the fact that $\nabla\nu$ does not vanish implies that $1-\nu^2$ and $K- c \nu^2$ do not vanish. 

Differentiating \eqref{E1-1}  yields
\begin{equation} \label{eq:d1e1}
2(\nu_1\nu_{11}+\nu_2\nu_{12})=2(K+ c(1-2\nu^2))\nu\nu_1-(1-\nu^2)K_1,
\end{equation}
 \begin{equation} \label{eq:d2e1}
2(\nu_1\nu_{12}+\nu_2\nu_{22})=2(K+ c(1-2\nu^2))\nu\nu_2-(1-\nu^2)K_2.
\end{equation}
Then, reporting \eqref{E1-1} and \eqref{E2-1} in $\nu_2$\eqref{eq:d1e1}$+\nu_1$\eqref{eq:d2e1} gives \eqref{E2-2}.

Also, $\nu_1$\eqref{eq:d1e1}$-\nu_2$\eqref{eq:d2e1} gives
$$2(\nu_1^2\nu_{11}-\nu_2^2\nu_{22})
=2(K+ c(1-2\nu^2))\nu(\nu_1^2-\nu_2^2)-(1-\nu^2)(K_1\nu_1-K_2\nu_2).$$
Then, using the fact that
$$2(\nu_1^2\nu_{11}-\nu_2^2\nu_{22})=(\nu_{11}+\nu_{22})(\nu_1^2-\nu_2^2)+(\nu_{11}-\nu_{22})(\nu_1^2+\nu_2^2)$$ and
 using \eqref{E1-1} and \eqref{E2-1} we obtain \eqref{E2-3}.
\end{proof}

 \begin{prop}
  Let $\nu:\Sigma\to[-1,1]$ be a real analytic function satisfying \eqref{eq:gaussnu} and \eqref{eq:jacobinu}. Then $\nu$ satisfies
\begin{equation} \label{eq:third}
6c\nu\langle\nabla\nu,\nabla K\rangle=||\nabla K||^2-(K-c\nu^2)\Delta K
+4(K-c)(K-c\nu^2)(K+2c\nu^2). 
\tag{M3}
\end{equation} 
 \end{prop}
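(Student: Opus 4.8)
The goal is to derive the pointwise order-0 equation \eqref{eq:third} from the lower-order equations \eqref{E1-1}, \eqref{E2-1}, \eqref{E2-2}, \eqref{E2-3}. The plan is to work in the local orthonormal frame $(e_1,e_2)$ and exploit the fact that equation \eqref{eq:third} is frame-independent: its left- and right-hand sides are expressed purely in terms of $\nu$, $K$, their gradients, and their Laplacians, all of which are intrinsic. So I would first prove the identity at a point where $\nabla\nu\neq0$, working in a frame adapted to $\nabla\nu$, and then extend to the whole surface by real analyticity (the set $\{\nabla\nu=0\}$ has empty interior unless $\nu$ is constant, in which case \eqref{eq:third} is checked separately using Lemma \ref{constantangle}).

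The key mechanism is that \eqref{E1-1} through \eqref{E2-3} give four linear equations for the four second-order quantities $\nu_{11},\nu_{12},\nu_{22}$ together with the combinations $K_1,K_2$. More precisely, \eqref{E2-1} and \eqref{E2-3} determine $\nu_{11}$ and $\nu_{22}$ separately (given $K_1,K_2$), while \eqref{E2-2} gives $\nu_{12}$. First I would solve this linear system to express each of $\nu_{11},\nu_{22},\nu_{12}$ explicitly in terms of $\nu,K,\nu_1,\nu_2,K_1,K_2$. Since $K-c\nu^2\neq0$ when $\nabla\nu\neq0$ (as noted in the previous proof via \eqref{E1-1}), these solutions are well defined. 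Crucially, \eqref{E1-1} lets me replace $\nu_1^2+\nu_2^2$ by $-(1-\nu^2)(K-c\nu^2)$ wherever it appears, which is what will produce the factor $(K-c)$ in the final formula.

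The next step is to generate a genuinely new relation by differentiating once more. The natural choice is to compute $\Delta\nu$ anew, or better, to differentiate \eqref{E2-1} (or an appropriate combination of \eqref{eq:d1e1}, \eqref{eq:d2e1}) and impose the symmetry of third covariant derivatives of $\nu$ via the Ricci identity, which in dimension two introduces the curvature $K$ precisely through commutators like $\nu_{112}-\nu_{121}$. The commutation relations, written out using the structure functions $\alpha_1,\alpha_2$ and the differentiation formulas listed before the lemma, relate the third derivatives of $\nu$ to $K\nu_i$ terms. Taking an inner product with $\nabla\nu$ — equivalently forming the combination $\nu_1(\cdots)+\nu_2(\cdots)$ of the differentiated equations — collapses the antisymmetric first-derivative-of-$K$ terms into $\langle\nabla\nu,\nabla K\rangle$ and $\Delta K$, and the algebra is arranged so that all remaining second derivatives of $\nu$ are eliminated by substituting the expressions found in the previous step. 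After using \eqref{E1-1} to clear $\|\nabla\nu\|^2$, the surviving terms should assemble into $\|\nabla K\|^2-(K-c\nu^2)\Delta K+4(K-c)(K-c\nu^2)(K+2c\nu^2)$ on one side and $6c\nu\langle\nabla\nu,\nabla K\rangle$ on the other.

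The main obstacle is the bookkeeping in this last differentiation-and-substitution step: one must differentiate \eqref{E2-2} and \eqref{E2-3} (which already contain $K_1,K_2$), correctly account for the structure functions $\alpha_i$ when passing from $e_i\cdot(\cdot)$ to covariant second derivatives of $K$, and then verify that the many cubic-in-$\nu$ and quadratic-in-gradient terms cancel to leave exactly the quartic polynomial $4(K-c)(K-c\nu^2)(K+2c\nu^2)$. It is easy to lose a factor of $c$ or a sign in these manipulations, so I would organize the computation by first recording the explicit solutions for $\nu_{11},\nu_{22},\nu_{12}$, then differentiating symbolically, and finally checking the polynomial identity in $K$ and $\nu$ degree by degree. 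The appearance of the factor $(K-c)$ is the structural signpost: it must arise from repeatedly substituting \eqref{E1-1} in the form $\nu_1^2+\nu_2^2+(1-\nu^2)(K-c\nu^2)=0$, and its presence is a strong consistency check on the whole calculation, since $K\equiv c$ corresponds exactly to the degenerate totally geodesic case.
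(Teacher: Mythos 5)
Your proposal is correct and is essentially the paper's own argument: the paper likewise disposes of the constant-$\nu$ case via Lemma \ref{constantangle}, restricts by analyticity to the open set where $\nabla\nu\neq0$ (so that $K-c\nu^2\neq0$), uses \eqref{E2-2} and \eqref{E2-3} to control the Hessian of $\nu$, and then obtains \eqref{eq:third} by substituting \eqref{eq:gaussnu} and \eqref{eq:jacobinu} into the Bochner--Weitzenb\"ock identity $\tfrac12\Delta||\nabla\nu||^2=\langle\nabla\nu,\nabla\Delta\nu\rangle+||\nabla^2\nu||^2+K||\nabla\nu||^2$ and dividing by $||\nabla\nu||^2$. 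The third-derivative commutation (Ricci identity) contracted against $\nabla\nu$ that you propose to carry out by hand is exactly this Bochner formula, so the two derivations differ only in whether that standard identity is cited or re-derived.
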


\begin{proof}
If $\nu$ is constant on an non empty open set, then by analyticity $\nu$ is constant. Then, by Lemma \ref{constantangle}, $K$ is also constant, and either $(K,\nu)=(c,1)$ or $(K,\nu)=(0,0)$. In both cases \eqref{eq:third} is satisfied.

From now on we assume that $\nu$ is not a constant function. By analyticity it sufficies to prove \eqref{eq:third} on a non empty open set $U\subset\Sigma$ on which $\nabla\nu$ does not vanish. By restricting $U$ if necessary, we may also assume that there exists an orthonormal frame $(e_1,e_2)$ on $U$. We now use the previous notations.

We will use the classical Bochner-Weitzenb\"ock formula (see for instance \cite{petersen}, Section 7.3, p. 175):
\begin{equation} \label{eq:bochner}
 \frac12\Delta||\nabla\nu||^2=\langle\nabla\nu,\nabla\Delta\nu\rangle+||\nabla^2\nu||^2+K||\nabla\nu||^2.
\end{equation}
We have $$||\nabla^2\nu||^2=\nu_{11}^2+\nu_{22}^2+2\nu_{12}^2
=\frac12(\nu_{11}+\nu_{22})^2+\frac12(\nu_{11}-\nu_{22})^2+2\nu_{12}^2,$$ so by  \eqref{E2-2} and \eqref{E2-3} we get
\begin{eqnarray*}
2(K-c\nu^2)^2||\nabla^2\nu||^2 & = & (K-c\nu^2)^2(\Delta\nu)^2+9c^2\nu^2||\nabla\nu||^4 \\
& & +||\nabla K||^2||\nabla\nu||^2-6c\nu\langle\nabla K,\nabla\nu\rangle||\nabla\nu||^2. 
\end{eqnarray*}
Also, by \eqref{eq:gaussnu} and \eqref{eq:jacobinu} we have
$$\Delta||\nabla\nu||^2=-(1-\nu^2)\Delta K+4\nu\langle\nabla K,\nabla\nu\rangle
+2(K+c-2c\nu^2)\nu\Delta\nu+2(K+c-6c\nu^2)||\nabla\nu||^2,$$
$$\langle\nabla\nu,\nabla\Delta\nu\rangle=2\nu\langle\nabla K,\nabla\nu\rangle+(2K-c-3c\nu^2)||\nabla\nu||^2.$$
Reporting these three identities into \eqref{eq:bochner} multiplied by $2(K-c\nu^2)^2$ gives 
\begin{eqnarray*}
 0 & = & (K-c\nu^2)^2(1-\nu^2)\Delta K+(K-c\nu^2)^2(-2K\nu-2c\nu+4c\nu^3+\Delta\nu)\Delta\nu \\
& & +9c^2\nu^2||\nabla\nu||^4+||\nabla K||^2||\nabla\nu||^2-6c\nu\langle\nabla K,\nabla\nu\rangle||\nabla\nu||^2 \\
& & +2(K-c\nu^2)^2(2K-2c+3c\nu^2)||\nabla\nu||^2.
\end{eqnarray*}
Dividing by $||\nabla\nu||^2$, taking \eqref{eq:gaussnu} and \eqref{eq:jacobinu} into account, gives
\begin{eqnarray*}
 0 & = & -(K-c\nu^2)\Delta K+3c\nu(K-c\nu^2)\Delta\nu+9c^2\nu^2||\nabla\nu||^2 \\
& & +||\nabla K||^2-6c\nu\langle\nabla K,\nabla\nu\rangle+2(K-c\nu^2)^2(2K-2c+3c\nu^2).
\end{eqnarray*}
We finally obtain \eqref{eq:third} after reporting again \eqref{eq:gaussnu} and \eqref{eq:jacobinu}.
\end{proof}

Equation \eqref{eq:third} is enough to treat the case where $K$ is constant; this will be done in Section \ref{sec:constantK}. The remainder of this section will be devoted to the case where $K$ is not constant. We next derive an order zero equation.

\begin{prop} \label{propP}
Assume that $K$ is not constant. Let $\nu:\Sigma\to[-1,1]$ be a real analytic function satisfying \eqref{eq:gaussnu} and \eqref{eq:jacobinu}. Then there exists a map $P:\Sigma\times\R\to\R$, depending only on the metric, such that
\begin{itemize}
 \item for every point $x\in\Sigma$, the map $P(x,\cdot)$ is an even polynomial map of degree at most $12$,
\item the map $x\mapsto P(x,\cdot)$ is real analytic, 
\item for every point $x\in\Sigma$, 
\begin{equation} \label{eq:fourth}
 P(x,\nu(x))=0,
\tag{M4}
\end{equation}
\item the set $\Sigma\setminus Z$ is dense, where $$Z=\{x\in\Sigma\mid P(x,\cdot)\equiv0\}.$$
\end{itemize}
\end{prop}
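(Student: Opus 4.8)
The plan is to eliminate the first derivatives of $\nu$ from the relations already established, so as to be left with a pointwise polynomial constraint on $\nu$ whose coefficients involve only $K$ and its covariant derivatives. The input is that \eqref{eq:gaussnu} expresses $\|\nabla\nu\|^2$ and \eqref{eq:third} expresses $\langle\nabla\nu,\nabla K\rangle$ as explicit functions of $\nu$ and of metric quantities, while \eqref{E2-1}, \eqref{E2-2} and \eqref{E2-3} together determine the full Hessian $\nabla^2\nu$ in terms of $\nu$, $\nabla\nu$, $K$ and $\nabla K$. By analyticity it suffices to produce the identity on the dense open set where $\nabla\nu\neq0$; since $K$ is non constant I may restrict further to the open set where $\nabla K\neq0$, and the final polynomial identity will then extend to all of $\Sigma$ by continuity.

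On this set I would use the orthogonal frame adapted to $\nabla K$, namely $e_1=\nabla K/\|\nabla K\|$ and $e_2=Je_1$, so that $K_2=0$. The component $\langle\nabla\nu,\nabla K\rangle$ is read off from \eqref{eq:third}, whereas the transverse component $B:=\langle\nabla\nu,J\nabla K\rangle$ is so far known only through its square, since in the two-dimensional tangent plane one has $\langle\nabla\nu,\nabla K\rangle^2+B^2=\|\nabla\nu\|^2\,\|\nabla K\|^2$, which by \eqref{eq:gaussnu} is an explicit function of $\nu$ and the metric. The heart of the argument is to obtain a second relation for $B$ that is affine rather than quadratic. For this I would compute $\langle\nabla\langle\nabla\nu,\nabla K\rangle,J\nabla K\rangle$ in two ways: first by applying the derivation $J\nabla K$ directly to the right-hand side of \eqref{eq:third}, the crucial point being that $J\nabla K\cdot K=0$ annihilates the terms that would otherwise produce uncontrolled $\langle\nabla K,\nabla K\rangle$ factors, so that $\nu$ is the only quantity whose $J\nabla K$-derivative reintroduces $B$; second through the identity $\nabla_X\langle\nabla\nu,\nabla K\rangle=(\nabla^2\nu)(\nabla K,X)+(\nabla^2K)(\nabla\nu,X)$ with $X=J\nabla K$, substituting for $\nabla^2\nu$ from \eqref{E2-1}, \eqref{E2-2}, \eqref{E2-3} and treating $\nabla^2K$ as a metric datum. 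Both expressions are affine in $B$, so equating them gives a relation $B\,\Phi=\Psi$ in which $\Phi$ and $\Psi$, after clearing the denominators $\nu$ and $K-c\nu^2$, are polynomial in $\nu$ with coefficients depending only on the metric.

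It then remains to eliminate $B$: squaring $B\,\Phi=\Psi$ and inserting the formula $B^2=\|\nabla\nu\|^2\|\nabla K\|^2-\langle\nabla\nu,\nabla K\rangle^2$ removes the transverse derivative, and, regarding $\nu$ thereafter as a free variable $t$ (with the metric coefficients frozen at $x$) and clearing denominators, I would define $P(x,t)$ to be the resulting polynomial; by construction $P(x,\nu(x))=0$, which is \eqref{eq:fourth}. That $P(x,\cdot)$ is even reflects the symmetry $\nu\leftrightarrow-\nu$ of Remark \ref{minusnu}; concretely it follows from a parity count, since $\langle\nabla\nu,\nabla K\rangle$ and $B$ are odd in $\nu$ while $\|\nabla\nu\|^2$ and $B^2$ are even, making $\Phi$ even and $\Psi$ odd, so that every term of $(\|\nabla\nu\|^2\|\nabla K\|^2-\langle\nabla\nu,\nabla K\rangle^2)\Phi^2-\Psi^2$ is even. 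The bound $\deg P(x,\cdot)\le 12$ is obtained by tracking the highest powers of $\nu$ through these expressions and cancelling the common factors produced by clearing denominators, and real analyticity of $x\mapsto P(x,\cdot)$ is immediate since its coefficients are polynomials in $K$ and finitely many of its covariant derivatives.

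The two genuine difficulties are the following. The first is the lengthy but routine bookkeeping in the two-way computation of $\langle\nabla\langle\nabla\nu,\nabla K\rangle,J\nabla K\rangle$: one must keep careful track of which third-order derivatives of $K$ appear and verify that, thanks to $J\nabla K\cdot K=0$, the outcome is genuinely affine in $B$ with the claimed parity and degree. The second, and to my mind the main obstacle, is the last assertion that $\Sigma\setminus Z$ is dense. Because the coefficients of $P$ are real analytic on the connected surface $\Sigma$, the set $Z$ has nonempty interior only if every coefficient vanishes on an open set, hence identically, that is only if $P\equiv0$ on $\Sigma\times\R$; thus density is equivalent to $P\not\equiv0$. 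I would establish $P\not\equiv0$ by exhibiting a single coefficient of $P$ that cannot vanish identically when $K$ is non constant, the natural candidate being a leading coefficient that the degree count ties to a nonzero multiple of $(K-c)^2$ and $\|\nabla K\|^2$; the delicate point is to confirm that the cancellations occurring when the denominators are cleared do not annihilate this coefficient.
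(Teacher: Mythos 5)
Your construction of $P$ follows essentially the same route as the paper's own proof. In the frame $e_1=\nabla K/\|\nabla K\|$, $e_2=Je_1$, the paper rewrites \eqref{eq:third} as $6c\nu K_1\nu_1=A$, differentiates this relation along $e_2$, substitutes \eqref{E2-2} for $\nu_{12}$ and the relation itself for $\nu_1$, and arrives at the affine relation \eqref{eq:nu2}, $K_1F\nu_2+\nu E=0$; substituting both into \eqref{E1-1} yields \eqref{eq:orderzero}, which becomes $P$ after multiplication by $K_1^4$. Your ``two evaluations'' of $\langle\nabla\langle\nabla\nu,\nabla K\rangle,J\nabla K\rangle$ are precisely this differentiation-plus-substitution, and squaring $B\Phi=\Psi$ against $B^2=\|\nabla\nu\|^2\|\nabla K\|^2-\langle\nabla\nu,\nabla K\rangle^2$ is exactly the substitution into \eqref{E1-1}; the parity and degree counts then come out the same way. (Two points you pass over: for non-orientable $\Sigma$ one must go to the orientable double cover and check that $P$ is orientation-independent --- true because the orientation-odd quantity $E$ enters only squared --- and one must note that the identity, proved on $\{\nabla\nu\neq0\}\cap\{\nabla K\neq0\}$, extends to $\Sigma$ by continuity and analyticity.)

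The genuine gap is the last item, the density of $\Sigma\setminus Z$, and it sits exactly where you locate the ``delicate point'': your proposed resolution would fail. You hope that some leading coefficient of $P(x,\cdot)$ is a nonzero multiple of $(K-c)^2\|\nabla K\|^2$, so that its identical vanishing would contradict non-constancy of $K$ outright. It is not. Computing the $\nu^{12}$-coefficient of \eqref{eq:orderzero} from the top-degree terms of $A$, $E$ and $F$ (the third term of \eqref{eq:orderzero} only has degree $10$) gives, up to a positive constant and a power of $\|\nabla K\|$,
$$(K-c)^2\Bigl(\bigl[K_{11}+4K_{22}-8K(K-c)\bigr]^2+9K_{12}^2\Bigr),$$
so, where $K\neq c$, its identical vanishing amounts to the pair of second-order PDEs $K_{12}=0$ and $K_{11}+4K_{22}=8K(K-c)$ on the metric, which is in no way manifestly incompatible with $K$ being non-constant; likewise, the vanishing of the order-$0$ coefficient is the single Ricci-type equation \eqref{eq:ozero}. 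No individual coefficient of $P$ is ``obviously'' nonzero, and this is why the paper's argument is substantially longer: assuming $Z$ contains an open set $\Omega$, it collects the three relations \eqref{eq:ozero}, \eqref{eq:otwelve1}, \eqref{eq:otwelve2} coming from the order-$0$ and order-$12$ coefficients, then brings in an input entirely absent from your proposal --- the intrinsic Gauss equation $K=-\alpha_2^2-e_1\cdot\alpha_2$ relating $K$ to the connection form of the adapted frame --- and differentiates twice along $e_1$ to produce \eqref{eq:KK1} and \eqref{eq:KK1bis}. These two relations force $K$ to be a root of a nontrivial polynomial with constant coefficients, hence constant on $\Omega$ and, by analyticity, on $\Sigma$: a contradiction. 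Until you supply this step (or a substitute using the structure equations of the metric), the nontriviality of $P$ --- the very property that makes Corollary \ref{numbernu} and Theorem \ref{thm:number} finiteness statements --- remains unproven.
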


\begin{proof}
We first assume that $\Sigma$ is orientable and we choose an orientation on $\Sigma$. We let $J$ denote the rotation of angle $\pi/2$ in $\rmT\Sigma$. Let $U=\{x\in\Sigma\mid\nabla K(x)\neq0\}$ and $V=\{x\in\Sigma\mid\nabla\nu(x)\neq0\}$. Since $K$ is real analytic, $U$ is a dense open subset. By Lemma \ref{constantangle} and since $\nu$ is real analytic, $V$ is also a dense open subset.

On $U$ we consider the orthonormal frame $(e_1,e_2)=(\nabla K/||\nabla K||,J\nabla K/||\nabla K||)$. In this frame we have $K_1=||\nabla K||$ and $K_2=0$. This implies in particular that
\begin{equation} \label{eq:alpha1alpha2}
 K_{12}=\alpha_1K_1,\quad K_{22}=\alpha_2K_1.
\end{equation}

We now do some computations in $U\cap V$ using the frame $(e_1,e_2)$. We have $K-c\nu^2\neq0$ by \eqref{eq:gaussnu} and the definition of $V$. Equation \eqref{eq:third} becomes 
\begin{equation} \label{eq:e12var}
6c\nu K_1\nu_1=A
\end{equation}
with $$A=K_1^2-(K- c \nu^2)\Delta K+4(K- c)(K- c \nu^2)(K+2 c \nu^2).$$
Differentiating \eqref{eq:e12var} with respect to $e_2$ and using \eqref{eq:alpha1alpha2}, we obtain
\begin{equation} \label{eq:d2e12}
\begin{array}{lll}
0 & = & -6c\nu_2K_1\nu_1-6c\nu(\nu_1K_{12}+\nu_2K_{22}+K_1\nu_{12})+2K_1K_{12} \\
& & +2c\nu\nu_2\Delta K-(K-c\nu^2)(\Delta K)_2+4(K-c)(2cK\nu\nu_2-8 c^2\nu^3\nu_2).
\end{array}
\end{equation}
After multiplication by $K- c \nu^2$ and reporting \eqref{E2-2}, this gives
$$0=(-6 c(K-4 c \nu^2)K_1\nu_1+B)\nu_2+(K- c \nu^2)(-6c\nu K_{12}\nu_1+C)$$ with
\begin{eqnarray*}
B & = & -3c\nu K_1^2+2c\nu(K-c\nu^2)(K_{11}-2K_{22}+4(K- c)(K-4 c \nu^2)),\\
C & = & 2K_1K_{12}-(K-c\nu^2)(\Delta K)_2.\\
\end{eqnarray*}
Multiplying by $\nu K_1$ and reporting \eqref{eq:e12var} yields
$$0=D\nu_2+(K-c\nu^2)\nu E$$
with
\begin{eqnarray*}
D & = & -(K-4 c \nu^2)K_1A+\nu K_1B, \\
E & = & -K_{12}A+K_1C \\
& = & K_1^2K_{12}+(K- c \nu^2)(K_{12}\Delta K-K_1(\Delta K)_2-4(K- c)(K+2 c \nu^2)K_{12}).
\end{eqnarray*}
Observe that $D$ factorizes as 
$$D=(K- c \nu^2)K_1F$$ with
$$F=K\Delta K-K_1^2-2c\nu^2K_{11}-8c\nu^2K_{22}-4K(K-c)(K-4c\nu^2).$$
From this we get 
\begin{equation} \label{eq:nu2}
0=K_1F\nu_2+\nu E.
\end{equation}
Finally, reporting \eqref{eq:e12var} and \eqref{eq:nu2} into \eqref{E1-1} gives
\begin{equation} \label{eq:orderzero}
A^2F^2+36 c^2\nu^4E^2+36 c^2K_1^2\nu^2(1-\nu^2)(K- c \nu^2)F^2=0.
\end{equation}

This equation holds on $U\cap V$ and extends by continuity to $U$. It has the desired form on $U$ but may not extend smoothly to $\Sigma$. So, observing that
$$K_1^2K_{11}=(\nabla^2K)(\nabla K,\nabla K),\quad
K_1^2K_{22}=(\nabla^2K)(J\nabla K,J\nabla K),$$
$$K_1^2K_{12}=(\nabla^2K)(\nabla K,J\nabla K),\quad
K_1(\Delta K)_2=\langle\nabla\Delta K,J\nabla K\rangle,$$
we see that if we multiply both sides of \eqref{eq:orderzero} by $K_1^4$ we obtain an equation of the form \eqref{eq:fourth}
that extends to the whole surface $\Sigma$, 
where, for each $x\in\Sigma$, $P(x,\cdot)$ is a polynomial map, and where the map $x\mapsto P(x,\cdot)$ is analytic. One can also easily check that $P(x,\cdot)$ is even and of degree at most $12$ for each point $x\in\Sigma$.

We now prove that $\Sigma\setminus Z$ is dense. Note that $\Sigma\setminus Z\subset U$. Assume that $\Sigma\setminus Z$ is not dense. Then $Z$ contains a non empty open set $\Omega$. Since $K$ is analytic and non constant, by taking a smaller set $\Omega$ if necessary, we may also assume that $K$, $K-c$ and $\nabla K$ do not vanish on $\Omega$.

The coefficients of orders $0$ and $12$ of $P(x,\cdot)$ vanish for every $x\in\Omega$. Using \eqref{eq:orderzero}, this leads to the following relations on $\Omega$:
\begin{equation} \label{eq:ozero}
 K\Delta K-K_1^2-4K^2(K- c)=0,
\end{equation}
\begin{equation} \label{eq:otwelve1}
K_{11}+4K_{22}-8K(K-c)=0, 
\end{equation}
\begin{equation} \label{eq:otwelve2}
 K_{12}=0.
\end{equation}

We are going to prove that these three equations lead to a contradiction. We recall that $K_2=0$. We deduce from \eqref{eq:alpha1alpha2} and \eqref{eq:otwelve2} that
\begin{equation} \label{eq:alpha1}
 \alpha_1=0.
\end{equation}
From \eqref{eq:ozero}, \eqref{eq:otwelve1} and \eqref{eq:alpha1alpha2} we get
\begin{equation} \label{eq:k11}
K_{11}=\frac83K(K-c)+\frac{4K_1^2}{3K}.
\end{equation}
\begin{equation} \label{eq:alpha2k1}
 \alpha_2K_1=K_{22}=\frac43K(K-c)-\frac{K_1^2}{3K}.
\end{equation}
On the other hand, by \eqref{eq:alpha1} we have
$$K=\langle\rmR(e_1,e_2)e_1,e_2\rangle=-\alpha_2^2-e_1\cdot\alpha_2,$$
so differentiating \eqref{eq:alpha2k1} with respect to $e_1$ yields
$$-(K+\alpha_2^2)K_1+\alpha_2K_{11}=\frac{8KK_1}3-\frac{4cK_1}3-\frac{2K_1K_{11}}{3K}+\frac{K_1^3}{3K^2}.$$
Reporting \eqref{eq:alpha2k1} and \eqref{eq:k11}, we obtain, after a straightforward computation, 
\begin{equation} \label{eq:KK1}
0=(K+20c)K_1^2-16K^2(K-c)^2. 
\end{equation}
Differentiating this equation with respect to $e_1$ and reporting \eqref{eq:k11}, we get, after another straightforward computation, 
\begin{equation} \label{eq:KK1bis}
0=(11K+160c)K_1^2-16K^2(11K^2-37cK+26c^2).
\end{equation}
It then follows from \eqref{eq:KK1} and \eqref{eq:KK1bis} that $K$ is a root of a non trivial polynomial. In particular, $K$ is constant  on $\Omega$, hence on $\Sigma$ by analyticity, which is a contradiction. This proves that $\Sigma\setminus Z$ is dense, and consequently that $P$ has the required properties. This concludes the proof in the case where $\Sigma$ is orientable.

We now assume that $\Sigma$ is not orientable. Let $\tilde\Sigma$ be the orientable double cover of $\Sigma$, with a given orientation. Then we can define $\tilde P:\tilde\Sigma\times\R\to\R$, and one can check that $\tilde P$ does not depend on the chosen orientation. Consequently we can define $P:\Sigma\times\R\to\R$ with the desired properties.
\end{proof}

\begin{cor} \label{numbernu}
 Assume that $K$ is not constant. Then there exist at most $12$  functions $\nu:\Sigma\to[-1,1]$ (and possibly none) satisfying \eqref{eq:gaussnu} and \eqref{eq:jacobinu}. Moreover, a function $\nu$ satisfies \eqref{eq:gaussnu} and \eqref{eq:jacobinu} if and only if $-\nu$ does.
\end{cor}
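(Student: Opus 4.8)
The plan is to combine the pointwise algebraic constraint of Proposition \ref{propP} with the real analyticity of solutions. Recall from the proof of Theorem \ref{thm:system} that any function satisfying \eqref{eq:jacobinu} is real analytic, and that $\Sigma$ is connected. Hence, for two distinct solutions $\nu$ and $\tilde\nu$, the analytic function $\nu-\tilde\nu$ is not identically zero, so by the identity theorem its zero set $\{\nu=\tilde\nu\}$ is a proper analytic subset of $\Sigma$, in particular closed and nowhere dense; equivalently, $\{\nu\neq\tilde\nu\}$ is open and dense.

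The core of the argument would be a proof by contradiction. Suppose there were at least $13$ distinct functions $\nu_1,\dots,\nu_{13}:\Sigma\to[-1,1]$ satisfying \eqref{eq:gaussnu} and \eqref{eq:jacobinu}. Consider the finitely many open dense sets $\{\nu_i\neq\nu_j\}$ for $i\neq j$, together with $\Sigma\setminus Z$, where $Z$ is the set of Proposition \ref{propP}: the latter is dense by that proposition, and it is open because $Z$ is the common zero locus of the analytic coefficients of $P(x,\cdot)$, hence closed. Since a finite intersection of dense open sets is again dense (and open), the set
$$W=(\Sigma\setminus Z)\cap\bigcap_{i\neq j}\{\nu_i\neq\nu_j\}$$
would be open and dense, in particular nonempty.

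Then I would pick a point $x_0\in W$ and conclude directly. By \eqref{eq:fourth}, each value $\nu_i(x_0)$ is a root of $P(x_0,\cdot)$, which is a nonzero polynomial, since $x_0\notin Z$, of degree at most $12$. As the $13$ values $\nu_1(x_0),\dots,\nu_{13}(x_0)$ are pairwise distinct by the definition of $W$, this polynomial would admit $13$ distinct roots, contradicting its degree. Therefore there are at most $12$ solutions. The main, and essentially only, subtlety here is the simultaneous avoidance of the degeneracy locus $Z$ and of all the pairwise coincidence loci $\{\nu_i=\nu_j\}$; this is exactly what is furnished by the density of $\Sigma\setminus Z$ from Proposition \ref{propP} together with the nowhere-density of each coincidence set coming from analyticity, the point being that a single pointwise bound of $12$ on the number of admissible \emph{values} does not by itself bound the number of \emph{functions} without ruling out coincidences on a small set.

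Finally, the symmetry statement is immediate, as already recorded in Remark \ref{minusnu}: equation \eqref{eq:gaussnu} involves $\nu$ only through $\nu^2$ and through $||\nabla\nu||^2=||\nabla(-\nu)||^2$, so it is unchanged under $\nu\mapsto-\nu$, while \eqref{eq:jacobinu} is odd in $\nu$ and thus merely changes sign under $\nu\mapsto-\nu$. Hence $\nu$ is a solution if and only if $-\nu$ is.
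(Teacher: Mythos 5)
Your proof is correct and follows essentially the same route as the paper: combine the pointwise degree-$12$ bound on $P(x,\cdot)$ from Proposition \ref{propP} with real analyticity to rule out more than $12$ solutions, then dispose of the sign symmetry directly from the form of \eqref{eq:gaussnu} and \eqref{eq:jacobinu}. The only difference is that you spell out the coincidence-avoidance step (intersecting $\Sigma\setminus Z$ with the open dense sets $\{\nu_i\neq\nu_j\}$ before evaluating at a point), which the paper compresses into the phrase ``by analyticity.''
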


\begin{proof}
Let $\Omega\subset\Sigma\setminus Z$ be a non empty connected open set. Then any function $\nu:\Sigma\to[-1,1]$ satisfying \eqref{eq:gaussnu} and \eqref{eq:jacobinu} on $\Omega$ satisfies \eqref{eq:fourth} on $\Omega$. By the properties of $P$, for each $x\in\Omega$ the set $\{s\in[-1,1]\mid P(x,s)=0\}$ is finite and has cardinal at most $12$ (and is possibly empty). Hence, by analyticity, there are at most $12$ functions $\nu$ satisfying \eqref{eq:fourth} on $\Omega$. Then a function $\nu:\Sigma\to[-1,1]$ satisfying \eqref{eq:gaussnu} and \eqref{eq:jacobinu} on $\Sigma$ is necessarily the analytic continuation of one of these functions. The last assertion of the corollary is obvious.
\end{proof}

\begin{rem} \label{riccicondition}
Ricci proved that a simply connected Riemannian surface $\Sigma$ with metric $\rmd s^2$ having negative curvature $K$ can be minimally isometrically immersed into $\R^3$ if and only if the metric $\sqrt{-K}\rmd s^2$ is flat, i.e., 
\begin{equation} \label{eq:ricci}
 ||\nabla K||^2=K\Delta K-4K^3.
\end{equation}
This condition is called the \emph{Ricci condition} (see \cite{docarmodajczer} for a generalization to higher dimensions and all space forms). Recently, using the description of minimal surfaces in $\R^3$ in terms of meromorphic data and a study of log-harmonic functions, A. Moroianu and S. Moroianu \cite{moroianu} proved that a simply connected Riemannian surface can be minimally isometrically immersed into $\R^3$ if and only if its curvature $K$ satisfies $K\leqslant0$ and \eqref{eq:ricci}.

It is not clear whether such a simple necessary and sufficient condition holds for minimal isometric immersions into $\M^2(c)\times\R$ (we observe that setting $c=0$ in \eqref{eq:third} gives \eqref{eq:ricci}). We can differentiate \eqref{eq:e12var} with respect to $e_1$ or \eqref{eq:fourth} with respect to $e_1$ or $e_2$, and next report \eqref{eq:e12var} and \eqref{eq:nu2} to obtain other order zero equations satisfied by $\nu$, in terms of other derivatives of the curvature. However this seems to lead to very complicated computations.
\end{rem}

\section{Minimal surfaces with constant intrinsic curvature} \label{sec:constantK}

Trivial examples of minimal surfaces with constant intrinsic curvature are totally geodesic surfaces of $\M^2(c)\times\R$ (see Lemma \ref{constantangle}). We first describe a non trivial example, and then prove that all these minimal surfaces are either totally geodesic or congruent to a part of an associate surface of this example.

It is interesting to mention that constant intrinsic curvature surfaces (non necessarily minimal) in $\M^2(c)\times\R$ are studied in \cite{aeg}.

\begin{exa}[The parabolic generalized catenoid in $\h^2\times\R$] \rm \label{ex:catenoid}
We recall the following example from \cite{danieltams} when $c<0$. Here we do the normalization $c=-1$. Proposition 4.17 in \cite{danieltams} describes a properly embedded minimal surface $\cC$ in $\h^2\times\R$ having the following properties:
\begin{itemize}
 \item the intersection of $\cC$ with a horizontal plane $\h^2\times\{a\}$ is either empty or a horocycle,
\item all these horocycles have asymptotic points that project to the same point in $\partial_\infty\h^2$,
\item the surface $\cC$ is invariant by a one-parameter family of horizontal parabolic isometries,
\item the surface $\cC$ has a horizontal plane of symmetry.
\end{itemize}
We will call this surface a \emph{parabolic generalized catenoid}. Such a surface is unique up to isometries of $\h^2\times\R$. It belongs to a two-parameter families of minimal surfaces foliated by horizontal curves of constant curvature, which were classified by L. Hauswirth \cite{hauswirthpacific}. Moreover, this surface is a limit of rotational catenoids as the radius of the circle in their horizontal plane of symmetry tends to $+\infty$ (one has to fix a point of the circle and the tangent plane at this point to get the limit). The associate family of $\cC$ contains in particular the left and right helicoids with vertical period $\pi$ (see Proposition 4.20 in \cite{danieltams} and Example 18 in \cite{hset}).

The embedding given in Proposition 4.17 of \cite{danieltams} is an embedding $f:D\to\h^2\times\R$ where $D=(-\pi/2,\pi/2)\times\R$, and the induced metric is given in canonical coordinates $(u,v)$ by  $$\rmd s^2=\frac{\rmd u^2+\rmd v^2}{\cos^2u}.$$ One can check that this metric is complete and has constant curvature $-1$. Moreover, the angle function $\mu$ of the immersion is given by  $\mu=\sin u$. We consider the orthonormal frame $(e_1,e_2)$ defined by
$$e_1=\cos u\frac{\partial}{\partial u},\quad e_2=\cos u\frac{\partial}{\partial v}$$ and we use the notation of Section \ref{sec:resolution}. Then $\mu_1=\cos^2u$ and $\mu_2=0$.


We observe that the curve $\gamma$ of equation $v=0$ in $D$ is a geodesic. For $t\in\R$ we let $\varphi_t:D\to D$ denote the hyperbolic translation by $t$ along $\gamma$ with a chosen orientation. Then $f\circ\varphi_t:D\to\h^2\times\R$ is a minimal isometric immersion (actually an embedding) with angle function $\mu\circ\varphi_t$. When $w\neq t$, the functions $\mu\circ\varphi_w$ and  $\mu\circ\varphi_t$ are not equal, since $\mu$ is strictly monotonous along  $\gamma$. Consequently, the immersions $f\circ\varphi_w$ and $f\circ\varphi_t$ are not associate unless $w=t$. 

The same argument holds replacing hyperbolic translations along $\gamma$ by hyperbolic rotations around a given point. Hence we get an example of a Riemannian surface admitting an infinite number (actually a two-parameter family) of non associate minimal isometric immersions. 

We also observe that $\mu\circ\varphi_t\to1$ (respectively, $\mu\circ\varphi_t\to-1$) uniformly on compact sets as $t\to+\infty$ (respectively, $t\to-\infty$). We recall that the immersions given by Theorem \ref{thm:tams} are unique only up to isometries in $\isomzero(\h^2\times\R)$. So, if we furthermore fix  points $z_0\in D$ and $p_0\in\h^2\times\R$, then we can choose a smooth family $(\Psi_t)_{t\in\R}$ of isometries in $\isomzero(\h^2\times\R)$ such that $\Psi_t(f(\varphi_t(z_0)))=p_0$ for every $t$ and such that the immersion $\Psi_t\circ f\circ\varphi_t$ converges (uniformly on compact sets) as $t\to+\infty$ to a constant height immersion, that is, the corresponding limit surface is a horizontal hyperbolic plane.
\end{exa}


In the next theorem we classify constant intrinsic curvature minimal surfaces in $\M^2(c)\times\R$ (when $c>0$ this classification was obtained in \cite{tu2}).

\begin{thm} \label{thm:cc}
Let $\Sigma$ be a minimal surface in $\M^2( c)\times\R$ with constant intrinsic curvature $K$. Then
\begin{itemize}
 \item either $\Sigma$ is totally geodesic and $K=0$ or $K= c$,
\item either $ c<0$, $K= c$ and $\Sigma$ is part of an associate surface of the parabolic generalized catenoid.
\end{itemize}
\end{thm}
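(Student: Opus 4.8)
The plan is to use that constancy of $K$ forces the order-zero equation \eqref{eq:third} to degenerate into a pointwise algebraic constraint on $\nu$, and then to treat the resulting cases geometrically. Throughout I may assume $\Sigma$ oriented, passing to the oriented double cover otherwise.

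Since $K$ is constant, $\nabla K=0$ and $\Delta K=0$, so \eqref{eq:third} reduces to $(K-c)(K-c\nu^2)(K+2c\nu^2)=0$ at every point. First I would treat the case $K\neq c$: then $\nu^2\in\{K/c,\,-K/(2c)\}$ pointwise. If moreover $K\neq0$ these two values are distinct, so the continuous function $\nu^2$ takes values in a two-point set on the connected surface $\Sigma$ and is therefore constant; hence $\nu$ is constant and Lemma~\ref{constantangle} forces $K\in\{0,c\}$, a contradiction. Thus $K\neq c$ implies $K=0$, whence $\nu\equiv0$ and, again by Lemma~\ref{constantangle}, $\Sigma$ is a vertical totally geodesic surface $\gamma\times\R$; this is the first alternative.

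It remains to analyse $K=c$, where \eqref{eq:gaussnu} reads $\|\nabla\nu\|^2=-c(1-\nu^2)^2$ and \eqref{eq:jacobinu} reads $\Delta\nu=c\nu(1-\nu^2)$. If $c>0$ the right-hand side of the first identity is $\leqslant0$, forcing $\nabla\nu=0$; then that identity gives $(1-\nu^2)^2=0$, i.e.\ $\nu^2=1$, a horizontal totally geodesic surface. If $c<0$, then by analyticity either $\nu\equiv\pm1$ (again totally geodesic) or the open set $\{\nu^2<1\}$ is dense, and on it $\nabla\nu$ never vanishes. The crux is to identify the data on $\{\nu^2<1\}$: I would introduce $w=\tfrac{1}{\sqrt{-c}}\operatorname{arctanh}\nu$, for which the two identities give $\|\nabla w\|=1$, so $w$ is a distance function, its gradient trajectories are unit-speed geodesics, and there are Fermi coordinates $(w,v)$ with $\rmd s^2=\rmd w^2+f(w,v)^2\,\rmd v^2$ and $\nu=\tanh(\sqrt{-c}\,w)$. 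A short computation yields $\operatorname{div}(\nabla\nu/\|\nabla\nu\|)=\sqrt{-c}\,\nu$, i.e.\ the geodesic curvature of the level curves $\{w=\mathrm{const}\}$ equals $f_w/f=\sqrt{-c}\tanh(\sqrt{-c}\,w)$; integrating in $w$ forces $f(w,v)=a(v)\cosh(\sqrt{-c}\,w)$, and after reparametrizing $v$ we recover exactly the constant-curvature $c$ metric and angle function of the parabolic generalized catenoid of Example~\ref{ex:catenoid}. Finally, the converse part of Theorem~\ref{thm:system} shows that a minimal immersion is determined up to the associate family by $(\rmd s^2,\nu)$, so $\Sigma$ is, up to congruence, part of an associate surface of that catenoid, which is the second alternative.

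The main obstacle is this last identification, where the rigidity is concentrated: the first-order geodesic-curvature condition singles out the $\cosh$ solution of the constant-curvature equation $f_{ww}=-cf$ (the opposite sign would give $f\propto\operatorname{sech}(\sqrt{-c}\,w)$, which is inconsistent with that equation), so the normal form is essentially unique. I would also need to be careful to globalize from the dense set $\{\nu^2<1\}$ by analyticity, to absorb the sign ambiguity $\nu\leftrightarrow-\nu$ using Remark~\ref{minusnu}, and to match the abstract normal form with the explicit embedding of Example~\ref{ex:catenoid}, so that the conclusion is phrased in terms of the catenoid rather than merely an abstract surface of constant curvature $c$.
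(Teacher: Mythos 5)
Your proof is correct, and the crucial case is handled by a genuinely different argument from the paper's. The reduction via \eqref{eq:third} and the case analysis for $K\neq c$ coincide with the paper (your two-point-set/connectedness argument in fact fills in the paper's terse claim ``in particular $\nu$ is constant''). For the decisive case $K=c$, $c<0$, $\nu$ non-constant, the paper proceeds by recognition plus uniqueness: it observes that both $\nu$ and the catenoid's angle function $\mu\circ\varphi_t$ solve the system \eqref{E1-1}, \eqref{E2-1}, \eqref{E2-2}, \eqref{E2-3}, matches the initial data $\nu(0,0)$, $\nu_1(0,0)$, $\nu_2(0,0)$ at one point by choosing the hyperbolic translation $\varphi_t$, and invokes uniqueness of analytic solutions with given initial conditions; this requires knowing the comparison solution $\mu=\sin u$ in advance. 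You instead derive the solution from scratch: the substitution $w=\operatorname{arctanh}(\nu)/\sqrt{-c}$ turns \eqref{eq:gaussnu} into the eikonal equation $\|\nabla w\|=1$, Fermi coordinates together with the computation $\Delta w=\sqrt{-c}\,\nu$ give the first-order equation $f_w/f=\sqrt{-c}\tanh(\sqrt{-c}\,w)$, and integration yields the normal form $\rmd w^2+\cosh^2(\sqrt{-c}\,w)\,\rmd v^2$ with $\nu=\tanh(\sqrt{-c}\,w)$; both proofs then finish identically, via the uniqueness clause of Theorem \ref{thm:system} and analytic continuation. What your route buys: it is constructive and self-contained, avoiding the paper's asserted-but-not-proved uniqueness statement for the PDE system, and it would discover the catenoid even if one did not know it beforehand; what the paper's route buys: brevity, and no reliance on the (standard) facts about distance functions and Fermi coordinates. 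Two small points you should still write out to make the argument complete: the normalization $c=-1$ (or an explicit scaling), so that the comparison with Example \ref{ex:catenoid}, stated there for $c=-1$, is literal; and the explicit change of variables identifying your normal form with that example, namely the Gudermannian substitution $\sinh w=\tan u$, under which $\cosh w=1/\cos u$, $\rmd w=\rmd u/\cos u$ and $\tanh w=\sin u$, so that $\rmd w^2+\cosh^2 w\,\rmd v^2=(\rmd u^2+\rmd v^2)/\cos^2 u$ and $\nu=\sin u=\mu$ --- a routine verification, but it is where the two data sets are actually matched.
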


\begin{proof}
Let $\nu:\Sigma\to[-1,1]$ denote the angle function of $\Sigma$. We note that, since $K$ is constant, \eqref{eq:gaussnu} and \eqref{eq:jacobinu} imply that the function $\nu$ is \emph{isoparametric}, that is, $\nabla\nu$ and $\Delta\nu$ are functions of $\nu$ only (this is similar to the situations treated in \cite{km,tutams,tu2}). Since $K$ is constant, \eqref{eq:third} becomes
$$0=4(K-c)(K-c\nu^2)(K+2c\nu^2).$$

If $K\neq c$, then this implies that $K=c\nu^2$ or $K=-2c\nu^2$. In particular $\nu$ is constant, so Lemma \ref{constantangle} gives the result.

Assume now that $K=c$. If $\nu$ is constant, then Lemma \ref{constantangle} gives the result. So we now assume that $\nu$ is not a constant function. By analyticity we may restrict ourselves to a simply connected open set $\Omega\subset\Sigma$ on which $\nabla\nu$ does not vanish. Then \eqref{eq:gaussnu} implies that $c(1-\nu^2)^2<0$, so $c<0$.

Up to scaling, we may assume that $c=-1$. We set $D=(-\pi/2,\pi/2)\times\R$ and we endow $D$ with the metric $$\rmd s^2=\frac{\rmd u^2+\rmd v^2}{\cos^2u}.$$ We saw in Example \ref{ex:catenoid} that this metric has curvature $-1$ and is complete. Hence, we can assume that $\Omega$ is given by an immersion $U\to\M^2( c)\times\R$ for some open domain $U\subset D$. We consider the orthonormal frame $(e_1,e_2)$ defined by
$$e_1=\cos u\frac{\partial}{\partial u},\quad e_2=\cos u\frac{\partial}{\partial v}$$ and we use the notation of Section \ref{sec:resolution}. 


By applying isometries of $D$, we may assume that $(0,0)\in U$, $\nu_1(0,0)>0$ and $\nu_2(0,0)=0$. We set $a=\nu(0,0)$. Then by \eqref{E1-1} we have $\nu_1(0,0)=1-a^2$; in particular $a\in(-1,1)$. Then $\nu$ is solution to \eqref{E1-1}, \eqref{E2-1}, \eqref{E2-2}, \eqref{E2-3} with initial conditions
$$\nu(0,0)=a,\quad \nu_1(0,0)=1-a^2,\quad \nu_2(0,0)=0.$$ 

For $(u,v)\in D$ we set $\mu(u,v)=\sin u$. By the discussion in Example \ref{ex:catenoid}, $\mu$ is the angle function of the parabolic generalized catenoid and for every $t\in\R$  the function $\mu\circ\varphi_t$ is a solution to \eqref{E1-1}, \eqref{E2-1}, \eqref{E2-2}, \eqref{E2-3}, where $\varphi_t$ is as in Example \ref{ex:catenoid}.

We now claim that there exists $t\in\R$ such that 
$$(\mu\circ\varphi_t)(0,0)=a,\quad(\mu\circ\varphi_t)_1(0,0)=b,\quad(\mu\circ\varphi_t)_2(0,0)=0.$$ 

Indeed, since $a\in(-1,1)$, there exists $u_0\in(-\pi/2,\pi/2)$ such that $\mu(u_0,0)=a$. Let $t\in\R$ be such that $\varphi_t(0,0)=(u_0,0)$. Then clearly $(\mu\circ\varphi_t)(0,0)=a$. Also for $i=1,2$ we have $\rmd_{(0,0)}\varphi_t(e_i(0,0))=e_i(\varphi_t(0,0))$, so $(\mu\circ\varphi_t)_1(0,0)=\mu_1(\varphi_t(0,0))=1-a^2$ and $(\mu\circ\varphi_t)_2(0,0)=\mu_2(\varphi_t(0,0))=0$. This proves the claim.

Since there is at most one function satisfying \eqref{E1-1}, \eqref{E2-1}, \eqref{E2-2}, \eqref{E2-3} with those intial conditions, we obtain that $\nu=\mu\circ\varphi_t$. Then by Theorem \ref{thm:system} and by analyticity $\Sigma$ is  part of an associate surface of the parabolic generalized catenoid.
\end{proof}

We mention that there is an important literature about minimal isometric immersions of space forms into space forms or complex space forms in arbitrary dimensions (see for instance \cite{docarmowallach,bryanttams,km,lizhenqi} and references therein). Note that $\h^2\times\R$ embeds isometrically as a totally geodesic hypersurface in $\h^2\times\h^2$. It is perhaps interesting to try to generalize Theorem \ref{thm:cc} for immersions into higher dimensional products of space forms.


\section{On the set of minimal isometric immersions of a given surface} \label{sec:number}

\begin{thm} \label{thm:number}
 Let $\Sigma$ be a simply connected Riemannian surface with non constant curvature. Then, up to congruences, the set of minimal isometric immersions from $\Sigma$ to $\M^2(c)\times\R$ is empty or consists of at most six families of associate immersions.
\end{thm}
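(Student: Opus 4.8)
The plan is to reduce Theorem \ref{thm:number} to a counting statement about angle functions, for which Corollary \ref{numbernu} already does the essential work. First I would observe that a minimal isometric immersion $f:\Sigma\to\M^2(c)\times\R$ is determined, up to the associate family and up to isometries of $\M^2(c)\times\R$, by its angle function $\nu$. Indeed, by the converse direction of Theorem \ref{thm:system}, any smooth $\nu:\Sigma\to(-1,1)$ satisfying \eqref{eq:gaussnu} and \eqref{eq:jacobinu} gives rise to a minimal isometric immersion with angle function $\nu$, and any two such immersions with the \emph{same} $\nu$ are associate. Since $\Sigma$ has non constant curvature, Lemma \ref{constantangle} forbids the constant-angle (totally geodesic) cases, so every minimal isometric immersion of $\Sigma$ has non constant $\nu$ and in particular $\nu$ takes values in $(-1,1)$ on a dense set; combined with analyticity this lets me work entirely at the level of the angle function without worrying about the excluded endpoints $\nu^2=1$.

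Next I would invoke Corollary \ref{numbernu}: because $K$ is not constant, there are at most $12$ functions $\nu:\Sigma\to[-1,1]$ satisfying \eqref{eq:gaussnu} and \eqref{eq:jacobinu}, and they come in pairs $\{\nu,-\nu\}$. So up to sign there are at most $6$ admissible angle functions. The final step is to check that passing from $\nu$ to $-\nu$ does not produce a genuinely new family of immersions up to congruence: by Remark \ref{minusnu}, the associate families attached to $\nu$ and to $-\nu$ differ by a rotation of angle $\pi$ about a horizontal geodesic, which is an element of $\isom(\M^2(c)\times\R)$, hence the two associate families are congruent. Therefore each unordered pair $\{\nu,-\nu\}$ contributes a single family of associate immersions up to congruence, giving at most $6$ such families and proving the theorem.

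The main point requiring care is the bookkeeping between the three equivalence relations in play: being associate, being congruent, and sharing an angle function. I would spell out that two immersions are in the same associate family precisely when they have the same angle function (the ``moreover'' clause of Theorem \ref{thm:system}), that changing $\nu$ to $-\nu$ changes the family only by a congruence (Remark \ref{minusnu}), and that an arbitrary ambient isometry may also act on the angle function — but since the angle function is defined via the upward vertical field $\xi$, an isometry in $\isomzero(\M^2(c)\times\R)$ leaves $\nu$ unchanged while the remaining components of $\isom$ can at most reverse the sign of $\nu$, so no congruence class of immersions escapes the $\{\nu,-\nu\}$ accounting. Once this correspondence is set up, the bound of $6$ is immediate from Corollary \ref{numbernu}; the only real obstacle is to state the sign-and-congruence argument cleanly rather than to carry out any further computation.
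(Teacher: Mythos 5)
Your proposal is correct and follows essentially the same route as the paper: the paper's proof likewise combines Theorem \ref{thm:system} (angle function determines the immersion up to associates and congruence), Lemma \ref{constantangle} (non-constant curvature forces non-constant $\nu$), Corollary \ref{numbernu} (at most $12$ admissible angle functions, closed under $\nu\mapsto-\nu$), and Remark \ref{minusnu} (the families for $\nu$ and $-\nu$ are congruent), yielding the bound of six. Your additional bookkeeping about how the components of $\isom(\M^2(c)\times\R)$ act on $\nu$, and about the dense set where $\nu^2<1$, just makes explicit what the paper leaves implicit.
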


\begin{proof}
 By Theorem \ref{thm:system}, the angle function $\nu:\Sigma\to[-1,1]$ of such an immersion satisfies \eqref{eq:gaussnu} and \eqref{eq:jacobinu}, and is not constant by Lemma \ref{constantangle}. By Corollary \ref{numbernu} there exist at most $12$ such functions $\nu$. Again by Theorem \ref{thm:system} each of these functions $\nu$ gives, up to congruences, at most one family of associate immersions. We conclude using Remark \ref{minusnu}.
\end{proof}

\begin{cor} \label{family}
Let $\Sigma$ be a simply connected Riemannian surface with non constant curvature. Let $(f_t)_{t\in I}$ be a continuous family of minimal isometric immersions from $\Sigma$ to $\M^2(c)\times\R$, where $I$ is a real interval. Then all immersions $f_t$, $t\in I$, are associate.
\end{cor}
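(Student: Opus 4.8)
The plan is to deduce Corollary~\ref{family} almost immediately from Theorem~\ref{thm:number}, with the continuity hypothesis serving only to collapse the finitely many potential families into a single one. By Theorem~\ref{thm:number}, since $\Sigma$ has non constant curvature, the set of minimal isometric immersions from $\Sigma$ into $\M^2(c)\times\R$, considered up to congruence, consists of at most six families of associate immersions. Equivalently, by Corollary~\ref{numbernu} there are at most twelve angle functions $\nu$ satisfying \eqref{eq:gaussnu} and \eqref{eq:jacobinu}, and these come in pairs $\{\nu,-\nu\}$ (Remark~\ref{minusnu}); each such $\nu$ determines, by Theorem~\ref{thm:system}, a single associate family. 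The key point is that the \emph{discreteness} of this set of angle functions, combined with continuity of the family $(f_t)_{t\in I}$, forces the angle function to be locally constant in $t$.

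The main step I would carry out is the following. To each immersion $f_t$ associate its angle function $\nu_t:\Sigma\to[-1,1]$, i.e.\ $\nu_t=\langle N_t,\xi\rangle\circ f_t$ where $N_t$ is the unit normal of $f_t$. Since $(f_t)_{t\in I}$ is a continuous family of immersions, for each fixed $x\in\Sigma$ the map $t\mapsto\nu_t(x)$ is continuous. Now fix a non empty connected open set $\Omega\subset\Sigma\setminus Z$ as in the proof of Corollary~\ref{numbernu} and fix a point $x_0\in\Omega$. By Proposition~\ref{propP}, $\nu_t(x_0)$ is a root of the fixed polynomial $P(x_0,\cdot)$, which has finitely many roots in $[-1,1]$. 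Thus $t\mapsto\nu_t(x_0)$ is a continuous map from the interval $I$ into a finite subset of $[-1,1]$, hence is constant: there is a single value $s_0$ with $\nu_t(x_0)=s_0$ for all $t\in I$. I would then upgrade this pointwise statement: since each $\nu_t$ is the real analytic function on the connected surface $\Sigma$ determined (by analyticity and the argument of Corollary~\ref{numbernu}) by its germ near $x_0$ among the at most twelve admissible solutions, the equality of the value at $x_0$ for all $t$ forces $\nu_t=\nu_{t_0}$ on all of $\Sigma$ for every $t\in I$, where $t_0\in I$ is any fixed parameter.

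Having shown that all the $f_t$ share the same angle function $\nu:=\nu_{t_0}$, I would conclude by the uniqueness part of Theorem~\ref{thm:system}: if $f_t$ and $f_{t_0}$ are two minimal isometric immersions of the simply connected $\Sigma$ with the same angle function $\nu$, then $f_t$ and $f_{t_0}$ are associate. As this holds for every $t\in I$, all the immersions $f_t$ are associate to $f_{t_0}$, hence to one another, which is the assertion.

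The only genuine obstacle is making rigorous the passage from ``$\nu_t(x_0)$ is constant in $t$'' to ``$\nu_t$ is constant in $t$ as a function on $\Sigma$''; this is where one must invoke that the admissible angle functions form a finite set whose members are distinguished by their value (more safely, by their germ) at a single point of $\Sigma\setminus Z$, so that continuity of $t\mapsto f_t$ cannot allow $\nu_t$ to jump between distinct members of this finite set. A mild technical care is also needed to ensure that $\Sigma\setminus Z$ is non empty (guaranteed by Proposition~\ref{propP}) so that such an $x_0$ exists, and that the continuity of $(f_t)$ indeed yields continuity of $t\mapsto\nu_t(x_0)$, which is immediate since $\nu_t$ is built from the first derivatives of $f_t$ and these vary continuously. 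Everything else is a direct citation of the results already established.
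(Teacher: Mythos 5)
Your proposal follows the paper's strategy exactly: the angle functions $\nu_t$ all lie in the finite set $Y$ of solutions of \eqref{eq:gaussnu} and \eqref{eq:jacobinu} (Corollary \ref{numbernu}), continuity in $t$ plus finiteness of $Y$ forces $\nu_t$ to be independent of $t$, and the uniqueness clause of Theorem \ref{thm:system} then shows all the $f_t$ are associate. The paper's own proof is precisely this, compressed into one line: $t\mapsto\nu_t$ is a continuous map from the interval $I$ into the finite set $Y$, hence constant.

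However, your concrete implementation of the middle step has a gap that you yourself half-notice in your closing paragraph: constancy of $t\mapsto\nu_t(x_0)$ at a \emph{single} point $x_0$ does not imply constancy of $t\mapsto\nu_t$ as functions, because two distinct admissible functions $\mu_i\neq\mu_j$ in $Y$ may perfectly well satisfy $\mu_i(x_0)=\mu_j(x_0)$: the roots of $P(x_0,\cdot)$ need not separate the members of $Y$, so nothing prevents $\nu_t$ from jumping from $\mu_i$ to $\mu_j$ while keeping the same value at $x_0$. Your suggested repair, distinguishing the members of $Y$ by their germs at $x_0$, is a true statement (distinct analytic functions on the connected surface $\Sigma$ have distinct germs at every point), but it does not by itself connect with the continuity hypothesis, which only gives you continuity of point evaluations $t\mapsto\nu_t(x)$. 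The clean fix is either of the following. First option: for each pair $i<j$ with $\mu_i\neq\mu_j$, choose a point $x_{ij}\in\Sigma$ with $\mu_i(x_{ij})\neq\mu_j(x_{ij})$; then the map $t\mapsto(\nu_t(x_{ij}))_{i<j}$ is continuous on $I$ with finitely many possible values, hence constant, and this finite vector of values does distinguish the members of $Y$. Second option: each set $I_i=\{t\in I\mid \nu_t=\mu_i\}$ is closed in $I$ (if $t_n\to t$ with $\nu_{t_n}=\mu_i$, then $\nu_t(x)=\lim_n\nu_{t_n}(x)=\mu_i(x)$ for every $x$), and a connected interval cannot be partitioned into two or more nonempty disjoint closed sets. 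With either repair your argument is complete and coincides with the paper's.
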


\begin{proof}
Let $Y$ be the set of smooth functions $\nu:\Sigma\to[-1,1]$ satisfying \eqref{eq:gaussnu} and \eqref{eq:jacobinu}. For each $t\in I$, let $\nu_t:\Sigma\to[-1,1]$ be the angle function of $f_t$. Then the map $t\mapsto\nu_t$ is a continuous map from $I$ to $Y$, which is finite by the proof of Theorem \ref{thm:number}. So this map is constant, that is, all immersions $f_t$ have the same angle function, hence they are associate by Theorem \ref{thm:system}.
\end{proof}

\begin{rem} \label{rem:family}
By Theorem \ref{thm:cc}, the hypothesis in Theorem \ref{thm:number} and Corollary \ref{family} that the surface has non constant curvature can be removed if $c>0$ and replaced by the hypothesis that the surface does not have constant curvature $c$ when $c<0$.
\end{rem}

\begin{rem}
Let $\Sigma$ be a simply connected Riemannian surface and $f:\Sigma\to\M^2(c)\times\R$ a minimal isometric immersion. Let $\sigma\in\isom(\M^2(c)\times\R)$ be the reflection with respect to a horizontal totally geodesic surface and $\varphi\in\isom(\M^2(c)\times\R)$  the reflection with respect to a vertical totally geodesic surface. We assume that the curvature of $\Sigma$ is not constant. Then there exists a continuous one-parameter family of minimal isometric immersions of $\Sigma$ containing both $f$ and $\sigma\circ f$ (namely the associate family of $f$, since $\sigma\circ f=f^\pi$), while there does not exist a continuous one-parameter family of minimal isometric immersions of $\Sigma$ containing both $f$ and $\varphi\circ f$ (by Corollary \ref{family} and Lemma \ref{constantangle}, since $f$ and $\varphi\circ f$ have opposite angle functions; see Proposition 3.8 in \cite{danieltams}).
\end{rem}

The following example, due to R. Sa Earp \cite{saearp}, is an example of non constant curvature Riemannian surfaces admitting two non associate minimal isometric immersions into $\h^2\times\R$ up to congruences. 

\begin{ex} \label{ex:saearp}
R. Sa Earp \cite{saearp} classified minimal surfaces in $\h^2\times\R$ that are invariant by a one-parameter family of hyperbolic screw motions. They constitute a two-parameter family $(\cS_{\ell,d})_{(\ell,d)\in\R^2}$, the parameters being the pitch (or slope) $\ell$ of the screw motion and the prime integral $d$ of the ordinary differential equation defining the generatrix curve. 

Note that the statements in \cite{saearp} asserting that there exists a two-parameter family of minimal surfaces that are isometric to a given surface $\cS_{\ell,d}$ are misleading, since changing the extra parameter $m$ in that paper simply corresponds to multiplying the second coordinate on the surface by a constant. Indeed, if $f_{\ell,d,m}$ denotes the immersion in \cite{saearp} with parameters  $\ell$, $d$, $m$, then, in the notation of  \cite{saearp}, $f_{\ell,d,m}(s,\tau)=f_{\ell,d,1}(s,\tau/m)$ (this can be seen from Theorem 4.2 in \cite{saearp}). In general, the immersions $f_{\ell,d,m}$ and $f_{\ell,d,1}$ do not induce the same metrics, since the map $(s,\tau)\mapsto(s,\tau/m)$ is not an isometry (by formula (15) in \cite{saearp}).

When $\ell=0$, the surface is invariant by a one-parameter family of horizontal hyperbolic isometries (it belongs to the family of L. Hauswirth \cite{hauswirthpacific} and it is also described in \cite{danieltams}, p. 6279). When $d=0$, the generatrix curve is a horizontal geodesic (it also belongs to the family of L. Hauswirth \cite{hauswirthpacific}). 

R. Sa Earp observed that the surfaces $\cS_{\ell,d}$ and $\cS_{\underline\ell,\underline d}$ are isometric but not associate if $d^2>1$, $\underline d^2<1$ and $(d^2-1)/(\ell^2+1)=(1-\underline d^2)/(\underline d^2+\underline\ell^2)$. This provides examples of Riemannian surfaces admitting two minimal isometric immersions into $\h^2\times\R$ that are not associate up to congruences. A model of such a surface is $\R^2$ with the metric is $$\rmd s^2=\rmd u^2+\Lambda(u)^2\rmd v^2$$
where $$\Lambda(u)=\sqrt{(d^2-1)\cosh^2u+\ell^2+1}.$$ We consider the orthonormal frame $(e_1,e_2)$ defined by
$$e_1=\frac\partial{\partial u},\quad e_2=\frac1{\Lambda(u)}\frac\partial{\partial v}$$ and we use the notation of Section \ref{sec:resolution}. Then $$[e_1,e_2]=-\frac{\Lambda'(u)}{\Lambda(u)}e_2,\quad\alpha_1=0,\quad\alpha_2=\frac{\Lambda'(u)}{\Lambda(u)},$$
so $$K=\langle\rmR(e_1,e_2)e_1,e_2\rangle=-\frac{\Lambda''(u)}{\Lambda(u)}
=-1+\frac{(\ell^2+1)(d^2+\ell^2)}{\Lambda(u)^4}.$$ 
We can check the functions $\nu$ and $\underline\nu$ such that
$$\nu^2(u)=\frac{(d^2-1)\cosh^2u}{(d^2-1)\cosh^2u+\ell^2+1}=1-\frac{\ell^2+1}{\Lambda(u)^2},$$
$$\underline\nu^2(u)=\frac{(d^2-1)\sinh^2u}{(d^2-1)\cosh^2u+\ell^2+1}=1-\frac{d^2+\ell^2}{\Lambda(u)^2}$$ 
satisfy \eqref{eq:gaussnu} and \eqref{eq:jacobinu}. They are the angle functions of $\cS_{\ell,d}$ and $\cS_{\underline\ell,\underline d}$ respectively.

The surfaces $\cS_{\ell,d}$ and $\cS_{0,\delta}$ with $\delta=\sqrt{1+\frac{d^2-1}{\ell^2+1}}$ are associate (this can be seen making the change of coordinate $v\mapsto\sqrt{\ell^2+1}v$). 
\end{ex}




\begin{ex}
Using a computer algebra system, we can check that a minimal surface in $\h^2\times\R$ (respectively, in $\s^2\times\R$) that is locally isometric to a catenoid (respectively, to an unduloid) is one of its associate surfaces (see \cite{hauswirthpacific,danieltams} for descriptions of catenoids and unduloids). 

Indeed, the metrics are given in an open subset of $\R^2$ by $\rmd s^2=\rmd u^2+\Lambda(u)^2\rmd v^2$ with $\Lambda(u)=\sqrt{\beta^2\sinh^2u+1}$ for some $\beta\in(-\infty,-1)\cup(1,+\infty)$ (respectively $\Lambda(u)=\sqrt{\beta^2\sin^2u+1}$ for some $\beta\neq0$). Using the orthonormal frame as that of Example \ref{ex:saearp}, we get $\alpha_1=0$, $K_2=0$, $K_{12}=0$ and $(\Delta K)_2=0$ (since $\Delta K=K_{11}+K_{22}=e_1\cdot K_1+\alpha_2K_1$ is a function of $u$ only). Hence $E=0$, so \eqref{eq:nu2} gives $\nu_2=0$ or $F=0$. If $F=0$, then $\nu$ is a function of $u$ only (since the coefficients in $F$ are functions of $u$ only and are not identically $0$), so we also have $\nu_2=0$.

Then reporting \eqref{eq:e12var} into \eqref{eq:gaussnu} gives a pointwise polynomial equation for $\nu$ of degree $8$. We obtain up to a sign four complex valued solutions: two of them do not satisfy \eqref{eq:gaussnu}, another one does not take values in $[-1,1]$. Hence there is, up to a sign, a unique solution to \eqref{eq:gaussnu} and \eqref{eq:jacobinu} taking values in $[-1,1]$.

In a similar way we obtain that the surfaces in Example \ref{ex:saearp} do not have non associate isometric minimal surfaces other than those described in that example (up to congruences).
\end{ex}

\begin{rem}
We do not know any example of a Riemannian surface admitting non associate minimal isometric immersions into $\s^2\times\R$ (up to congruences).

We do not know any example of a non constant curvature Riemannian surface admitting more than two non associate minimal isometric immersions into $\h^2\times\R$ (up to congruences).
\end{rem}

\begin{rem}
Let $H\neq0$. The problem of classifying non congruent CMC $H$ isometric immersions of a Riemannian surface into $\s^2\times\R$ or $\h^2\times\R$ remains largely open. F. Torralbo and F. Urbano \cite{tutams}  related this problem to a question about surfaces with parallel mean curvature vector in $\s^2\times\s^2$ or $\h^2\times\h^2$ that are not minimal. They provided some examples and classified pairs of non congruent CMC $H$ isometric immersions of a Riemannian surface having the same angle function. 

This problem is quite different from the case of minimal immersions. For instance, it is not known whether simply connected CMC $H$ surfaces admit a one-parameter family of CMC $H$ isometric deformations. On the other hand, CMC surfaces in $\s^2\times\R$ and $\h^2\times\R$ are related to minimal surfaces in simply connected homogeneous Riemannian $3$-manifolds with a $4$-dimensional isometry group by a local isometric Lawson-type correspondence \cite{danielcmh}. 

It is also interesting to mention that J. G\'alvez, A. Mart\'{\i}nez and P. Mira \cite{gmmcag} answered the question whether a surface in these homogeneous $3$-manifolds is uniquely dermined by its metric and its principal curvatures.
\end{rem}


%
%

\bibliographystyle{plain}
\bibliography{isometric}

\end{document}